\numberwithin{equation}{section}
\numberwithin{figure}{section}
\theoremstyle{plain}
\newtheorem{thm}{\protect\theoremname}
  \theoremstyle{definition}
  \newtheorem{defn}[thm]{\protect\definitionname}
  \theoremstyle{plain}
  \newtheorem{lem}[thm]{\protect\lemmaname}
  \theoremstyle{plain}
  \newtheorem{cor}[thm]{\protect\corollaryname}
  \theoremstyle{plain}
  \newtheorem{prop}[thm]{\protect\propositionname}
  \theoremstyle{remark}
  \newtheorem{rem}[thm]{\protect\remarkname}
\numberwithin{thm}{section}
  \providecommand{\corollaryname}{Corollary}
  \providecommand{\definitionname}{Definition}
  \providecommand{\lemmaname}{Lemma}
  \providecommand{\propositionname}{Proposition}
  \providecommand{\remarkname}{Remark}
\providecommand{\theoremname}{Theorem}
\begin{document}

\author{Thomas Dauer and Marlies Gerber}

\title{Generic Absence of Finite Blocking for Interior Points of Birkhoff
Billiards}

\thanks{The authors were supported in part by NSF grant DMS-1156515.}

\subjclass[2000]{Primary: 37J99, 37E99, 78A05; Secondary: 53C22.}

\keywords{Birkhoff billiards, security, finite blocking, geodesics, genericity.}
\begin{abstract}
Let $x$ and $y$ be points in a billiard table $M=M(\sigma)$ in
$\mathbb{\mathbb{R}}^{2}$ that is bounded by a curve $\sigma$. We
assume $\sigma\in\Sigma_{r}$ with $r\geq2$, where $\Sigma_{r}$
is the set of simple closed $C^{r}$ curves in $\mathbb{R}^{2}$ with
positive curvature. A subset $B$ of $M\setminus\{x,y\}$ is called
a \textit{blocking set} for the pair $(x,y)$ if every billiard path
in $M$ from $x$ to $y$ passes through a point in $B$. If a finite
blocking set exists, the pair $(x,y)$ is called ${\it secure}$ in
$M;$ if not, it is called ${\it insecure}$. We show that for $\sigma$
in a dense $G_{\delta}$ subset of $\Sigma_{r}$ with the $C^{r}$
topology, there exists a dense $G_{\delta}$ subset $\mathcal{\mathcal{R}=R}(\sigma)$
of $M(\sigma)\times M(\sigma)$ such that $(x,y)$ is insecure in
$M(\sigma)$ for each $(x,y)\in\mathcal{R}$. In this sense, for the
generic Birkhoff billiard, the generic pair of interior points is
insecure. This is related to a theorem of S. Tabachnikov \cite{Tab09}
that $(x,y)$ is insecure for ${\it all}$ sufficiently close points
$x$ and $y$ on a strictly convex arc on the boundary of a smooth
table. 
\end{abstract}

\maketitle

\section{Introduction}

\markboth{Running head}{Absence of Finite Blocking in Birkhoff Billiards}Consider
a compact plane region $M=M(\sigma)$ (``table'') bounded by a simple
closed curve $\sigma.$ A \textit{billiard} is the dynamical system
consisting of this table and a point mass that moves within the table
at unit speed along line segments whose endpoints are on the boundary.
At the boundary, the direction of motion changes instantaneously,
so that the angle of incidence equals the angle of reflection. The
trajectory of the point mass during a given time interval is called
a ${\it billiard}\ {\it path}.$ We say that a set $B\subset M\setminus\{x,y\}$
is a blocking set for a pair of points $(x,y)\in M\times M$ if every
billiard path from $x$ to $y$ passes through a point of $B.$ If
a finite blocking set exists, the pair $(x,y)$ is called ${\it secure;}$
if not, it is called \textit{insecure}. For example, if the boundary
of $M$ is an ellipse with foci $x$ and $y,$ then $(x,y)$ is insecure.
In particular, if the boundary is a circle with center $C,$ the pair
$(C,C)$ is insecure, but for $z\in M$ with $z\neq C,$ $(C,z)$
is secure. A table is called secure if ${\it each}$ pair of points
in the table is secure; if not, the table is called insecure. 

It is an open problem to characterize secure billiard tables, even
in the case of polygonal tables (see the survey \cite{Gut12} by E.
Gutkin). In 2004, T. Monteil \cite{Mon04} showed that there exists
a rational billiard table (i.e., a polygonal billiard table in which
all angles are rational multiples of $\pi$) that is insecure, contradicting
some earlier work in this area. According to Gutkin \cite{Gut05},
for a regular $n$-gon to be secure, it is necessary and sufficient
that $n=3,4,$ or $6$. The proof of necessity is deep and is based
on earlier work on security in translation surfaces \cite{Gut84,Vee89,Vee92,GutJud96,GutJud00,GutHubSch}.
Moreover, Gutkin \cite{Gut05} proved the security of polygons that
are tiled under reflection by one of the following: a triangle with
angles of $30^{\circ},60^{\circ},90^{\circ},$ a triangle with angles
of $45^{\circ},45^{\circ},90^{\circ},$ an equilateral triangle, or
a rectangle. Gutkin's conjecture \cite{Gut12} that this is a necessary
condition for a polygonal table to be secure is still open. 

We consider Birkhoff billiard tables, which are defined to be convex
tables with a smooth boundary (at least $C^{2}$ in the context of
this paper). The only prior result regarding security for points in
such tables seems to be the one by S. Tabachnikov \cite{Tab09}, who
showed that for every compact billiard table $M$ bounded by a smooth
curve, in particular a Birkhoff billiard, if $x$ and $y$ are sufficiently
close distinct points on a strictly convex arc of $\partial M,$ then
$(x,y)$ is insecure. We consider the case of points $x$ and $y$
in the interior of a Birkhoff billiard table. While we do not obtain
information about the security of any specific pair of points, we
show that insecurity is ${\it generic}$ in the sense of Baire category,
that is, it holds on a residual set (which is a set whose complement
is meager). More precisely, for $r\geq2,$ let $\Sigma_{r}$ be the
set of simple closed $C^{r}$ curves in the plane that have positive
curvature (as defined in Section 3), with the $C^{r}$ topology. For
any given pair of distinct points $x$ and $y$ in the plane, let
$\Sigma_{r,(x,y)}$ be the set of curves in $\Sigma_{r}$ that enclose
a region containing $x$ and $y$ in its interior. We show that there
exists a residual subset $A_{r,(x,y)}$ of $\Sigma_{r,(x,y)}$ such
that for every billiard table bounded by a curve in $A_{r,(x,y)}$
the pair $(x,y)$ is insecure. The Kuratowski-Ulam Theorem allows
us to reformulate this as follows: There is a residual subset $A_{r}$
of $\Sigma_{r}$ such that for $\sigma\in A_{r},$ insecurity holds
for $(x,y)$ in a residual subset of $M(\sigma)\times M(\sigma)$.
(See Theorem \ref{thm:main result}.) For Birkhoff billiards we should
allow boundary curves of nonnegative curvature, but for our result
we may assume positive curvature, since $\Sigma_{r}$ is an open dense
subset of the set of simple closed $C^{r}$ curves of nonnegative
curvature with the $C^{r}$ topology. Thus, our result shows that
for the generic Birkhoff billiard, the generic pair of interior points
is insecure.

In the context of compact Riemannian manifolds, the security property
(defined by replacing ``billiard paths'' by ``geodesics'') has been
studied extensively (see section 5.6 of \cite{Gut12} for a summary
of these results and references). According to \cite{Bur08}, it is
expected that most Riemannian manifolds are ${\it totally\ insecure}$,
that is, all pairs $(x,y)$ are insecure. Total insecurity has been
established in various settings \cite{GutSch,LafSch,Bur08,Ban10,Ger13}.
M. Gerber and W.-K. Ku \cite{Ger11} proved that for any compact $C^{\infty}$
manifold, there is a residual set of metrics for which the set of
insecure pairs $(x,y)$ is residual. In the case of dimension two,
V. Bangert and Gutkin \cite{Ban10} proved that the following stronger
result: All Riemannian manifolds of genus greater than one are totally
insecure, and for genus one, a residual set of Riemannian metrics
is totally insecure. (See also the work of W. Ho \cite{Ho08} for
related results.) Even though the result for Birkhoff billiards in
the present paper is analogous to the result for Riemannian manifolds
in \cite{Ger11}, the techniques are quite different, because Riemannian
metrics can be modified anywhere within the manifold, while our modifications
can only change the boundary of the table, not the geometry within
the table.

\section{Outline of Our Approach}

Let $r\ge2,$ and let $M=M(\sigma)$ be a billiard table in $\mathbb{R}^{2}$
bounded by a curve $\sigma\in\mbox{\ensuremath{\Sigma}}_{r},$ as
in Section 1. We consider ${\it distinct}$ points $x$ and $y$ in
the interior of $M$. A ${\it vertex}$ of a billiard path from $x$
to $y$ is a point on this path that lies on the boundary of $M.$
(See Section 3 for a precise definition of billiard paths.) To prove
that the pair $(x,y)$ is insecure, it suffices to show that for every
positive integer $n$, there exist $n$ billiard paths from $x$ to
$y$ that have no triple intersection points except $x$ and $y$.
A ${\it triple}\ {\it intersection}\ {\it point}$ is a point at which
three or more distinct segments of the $n$ paths meet. We include
the case in which two or more of these segments come from the same
billiard path, even though we would only need to consider the case
in which all three segments come from different paths in order to
prove insecurity. The absence of triple intersection points is part
of the following definition: 
\begin{defn}
\label{def:general position} Suppose there are $n$ billiard paths
from $x$ to $y$ in the billiard table $M.$ We say that these paths
are in ${\it general\ position}$ if the following four conditions
hold:
\begin{enumerate}
\item No two paths share a vertex. 
\item No point occurs as a vertex on the same path more than once.
\item The paths have no triple intersection points except $x$ and $y$. 
\item The points $x$ and $y$ are not interior points of any of the $n$
paths.
\end{enumerate}
\end{defn}
In Theorem \ref{theorem 1} we prove that for points $x\ne y$ in
the interior of $M$ and any positive integer $n,$ there exists an
arbitrarily small $C^{r}$ perturbation $\sigma_{n}$ of $\sigma$
bounding a table $M_{n}$ such that there exist $n$ billiard paths
from $x$ to $y$ in $M_{n}$ that are in general position. Moreover,
we can do this in such a way that there exists a $C^{r}$ neighborhood
of $\sigma_{n}$ in $\Sigma_{r}$ such that if $\widetilde{\sigma}$
is in this neighborhood, then we still have $n$ billiard paths from
$x$ to $y$ in the corresponding table $\widetilde{M}$ that are
in general position (see Lemma \ref{lemma open set}). It then follows
(see Corollary \ref{cor:generic table}) that the set $\Sigma_{r,(x,y)},$
consisting of curves in $\Sigma_{r}$ that bound a region containing
$x$ and $y$ in its interior, contains a dense $G_{\delta}$ subset
of curves for which the existence of $n$ billiard paths from $x$
to $y$ that are in general position holds ${\it for\ all}$ $n.$
In Theorem \ref{thm:main result}, we apply the Kuratowski-Ulam Theorem
(the analog for Baire Category of the Fubini Theorem), to conclude
that there is a dense $G_{\delta}$ subset $A$ of $\Sigma_{r}$ such
that for each $\sigma\in A,$ the set of insecure pairs $(x,y)$ among
those pairs $(x,y)$ such that $x$ and $y$ are in the interior of
the region bounded by $\sigma$ contains a dense $G_{\delta}$ set.

To begin the proof of Theorem \ref{theorem 1}, we need a method for
generating a large collection of billiard paths from $x$ to $y.$
A ${\it polygonal\ path}$ $P_{0}P_{1}\cdots P_{k+1}$ in the plane
is defined to be the union of oriented line segments from $P_{i}$
to $P_{i+1},$ for $i=0,\dots,k,$ where $P_{0},\dots,P_{k+1}$ are
points in $\mathbb{R}^{2}.$ We fix a positive integer $k$ and points
$x$ and $y$ in a billiard table $M,$ and consider polygonal paths
$P_{0}P_{1}\dots P_{k+1}$ with $P_{0}=x,$ $P_{k+1}=y,$ and $P_{1},\dots,P_{k}\in\partial M.$
The well-known Lemma \ref{maximal length} shows that any maximal-length
polygonal path of this type is a billiard path for $M.$ We use this
lemma to obtain a billiard path $\gamma_{n+1}$ from $x$ to $y$
if we are already given billiard paths $\gamma_{1},\dots,\gamma_{n}$
from $x$ to $y$ that are in general position, and we would like
to obtain $n+1$ paths from $x$ to $y$ that are in general position.
We would like the new path $\gamma_{n+1}$ to have at least one vertex
that is not a vertex of any of $\gamma_{1},\dots,\gamma_{n}.$ However,
even if we take $k$ to be very large compared to the total number
of vertices of $\gamma_{1},\dots,\gamma_{n},$ it is possible that
the path $\gamma_{n+1}$ obtained from Lemma \ref{maximal length}
is part of a periodic billiard path, and the vertices of $\gamma_{n+1}$
are contained in the set of vertices of $\gamma_{1},\dots,\gamma_{n}.$
To avoid this problem, we make a small $C^{r}$ perturbation $\widetilde{\sigma}$
of $\sigma$ and small perturbations of $\gamma_{1},\dots,\gamma_{n}$
to obtain billiard paths $\widetilde{\gamma}_{1},\dots,\widetilde{\gamma}_{n}$
from $x$ to $y$ for the table bounded by $\widetilde{\sigma}$ such
that $\widetilde{\gamma}_{1},\dots,\widetilde{\gamma}_{n}$ are still
in general position, and, in addition, no two distinct vertices of
$\widetilde{\gamma}_{1},\dots,\widetilde{\gamma}_{n}$ are collinear
with $y.$ This implies that there is no periodic billiard path passing
through $x$ and $y$ whose vertices are a subset of the vertices
of $\widetilde{\gamma}_{1},\dots,\widetilde{\gamma}_{n}.$ Then, if
we choose $k$ sufficiently large, the new path $\gamma_{n+1}$ obtained
from Lemma \ref{maximal length} must contain at least one vertex
$V$ that is not a vertex of $\widetilde{\gamma}_{1},\dots,\widetilde{\gamma}_{n}$
(see Lemma \ref{lem:New Vertex}). By making a further perturbation
of $\widetilde{\sigma}$ if necessary, as in Lemma \ref{only once},
we may assume that $\gamma_{n+1}$ passes through $V$ only once.
Then we are free to modify the path $\gamma_{n+1}$ to $\widetilde{\gamma}_{n+1}$
by changing the initial angle at $x,$ the final angle at $y,$ and
the table near $V$ so the part of $\widetilde{\gamma}_{n+1}$ starting
at $x$ until it hits the table near $V$ and the part of $\widetilde{\gamma}_{n+1}$
from near $V$ to the point $y$ are joined to form a billiard path
for the new table (see Proposition \ref{perturb vertex and tangent}).
This procedure does not change the paths $\widetilde{\gamma}_{1},\dots,\widetilde{\gamma}_{n},$
and it can be done in such a way that $\widetilde{\gamma}_{n+1}$
does not have any vertices in common with $\widetilde{\gamma}_{1},\dots,\widetilde{\gamma}_{n}.$
Then we can make further perturbations of the table near the vertices
of $\widetilde{\gamma}_{n+1}$ using Corollaries \ref{perturb initial segment}
and \ref{parallel perturbation} to obtain $n+1$ paths in general
position.

\section{Notation and Preliminaries }

Throughout this paper we assume $r\ge2,$ and we let $\Sigma_{r}$
be the set of simple closed $C^{r}$ curves $\sigma$ in $\mathbb{R}^{2}$
such that $\sigma$ has positive curvature, that is, its acceleration
vector has a positive component in the direction of the inward-pointing
normal vector for the region enclosed by $\sigma.$ Let $M=M(\sigma)$
be the compact region in $\mathbb{R}^{2}$ bounded by some $\sigma\in\Sigma_{r}.$
For convenience, we assume $\sigma$ is parametrized by the circle
$S^{1}=\mathbb{R}/\mathbb{Z}$ and the parametrization is at constant
speed. For $s\in S^{1},$ let ${\bf T}(s)$ denote the unit tangent
vector, ${\bf T}(s)=\sigma'(s)/|\sigma'(s)|,$ and let ${\bf N}(s)$
be the unit vector perpendicular to ${\bf T}(s)$ that is inward pointing
for $M$ at $\sigma(s).$ We may assume that $\sigma$ is oriented
in the counterclockwise direction, or equivalently, the pair of vectors
${\bf T}(s),{\bf N}(s)$ has the same orientation as the standard
basis $(1,0),(0,1)$ in $\mathbb{R}^{2}.$ Note that ${\bf T}$ and
${\bf N}$ are $C^{r-1}$ functions of $s$. We let $\kappa(s)$ denote
the curvature of $\sigma$  at $\sigma(s).$ Then ${\bf T}'(s)/|\sigma'(s)|=\kappa(s){\bf N}(s),$
with $\kappa(s)>0.$ 

A billiard path $\gamma(t)$, defined for $t\in\mathbb{R}$, represents
the position at time $t$ of a point mass moving within $M$ at unit
speed with elastic collisions at $\partial M.$ More precisely, there
is a partition $\cdots<c_{-2}<c_{-1}<c_{0}<c_{1}<c_{2}<\cdots$ of
$\mathbb{R}$ with $c_{0}\le0<c_{1}$ such that for each $i\in\mathbb{Z}$,
$\gamma|[c_{i-1},c_{i}]$ is a line segment in $M$ parametrized at
unit speed, $\gamma(c_{i})\in\partial M,$ and 
\[
\gamma'_{-}(c_{i})=(\cos\alpha_{i}){\bf T}-(\sin\alpha_{i}){\bf N}
\]

\[
\gamma'_{+}(c_{i})=(\cos\alpha_{i}){\bf T}+(\sin\alpha_{i}){\bf N},
\]
where ${\bf T}={\bf T}(s_{i})$ and ${\bf N}={\bf N}(s_{i}),$ with
$s_{i}\in S^{1}$ chosen so that $\gamma(c_{i})=\sigma(s_{i});$ $\alpha_{i}\in(0,\pi)$
is given by $\alpha_{i}=\varangle(\gamma'_{-}(c_{i}),{\bf T});$ and
$\gamma'_{-}$ $[\gamma'_{+}]$ denotes the derivative of $\gamma$
from the left {[}right{]}. It follows that 
\begin{equation}
\gamma'_{+}(c_{i})=\gamma'_{-}(c_{i})-2\text{\text{Proj}}_{{\bf N}}{\bf \gamma'_{-}}(c_{i}),\label{eq:reflected vector}
\end{equation}
where Proj$_{w}v$ denotes the orthogonal projection of $v$ onto
span($w$). We also consider billiard paths from a point $x\in{\rm Int}(M)$
to a point $y\in{\rm Int}(M).$ Such a path is a billiard path $\gamma(t)$
defined as above, except the domain of $\gamma$ is restricted to
a compact interval $[a,b]$, $\gamma(a)=x,$ and $\gamma(b)=y$. The
points $\gamma(c_{i})\in\partial M$ with $c_{i}\in(a,b)$ are called
the ${\it vertices}$ of the path. We will refer to the segment of
the path from $x$ to the first vertex as the ${\it initial\ segment}$,
and the segment of the path from the last vertex to $y$ as the ${\it final\ segment.}$
It is also possible for the billiard path from $x$ to $y$  to simply
be the segment $xy,$ in which case there are no vertices, and the
segment $xy$ is both the initial and the final segment. A segment
of the path that joins two consecutive vertices is called a ${\it chord}$
of the path. 

For $p\in{\rm Int}(M)$, let $T_{p}^{1}M$ denote the set of all unit
vectors based at $p$; for $p\in\partial M$, let $T_{p}^{1}M$ denote
the set of unit vectors based at $p$ that are inward-pointing for
$M$ (not including vectors tangent to $\partial M).$ Vectors in
$T_{p}^{1}M$ will be written in the form $(p,v),$ where $v\in S^{1},$
or simply as $v$ if the basepoint is understood. The phase space
of the billiard flow is $\cup_{p\in M}T_{p}^{1}M.$ For $v\in T_{p}^{1}M,$
the billiard flow $\Psi^{t}((p,v)):=(\gamma(t),\gamma_{+}'(t))$ where
$\gamma$ is the billiard path such that $\gamma(0)=p$ and $\gamma'_{+}(0)=v.$
The standard section of the billiard flow is the map $\Phi:\Gamma\to\Gamma,$
where $\Gamma:=\cup_{p\in\partial M}T_{p}^{1}M,$ and for $(p,v)\in\Gamma,$
$\Phi(v):=(\gamma(c_{1}),\gamma'_{+}(c_{1})),$ where $\gamma$ is
the billiard path with $\gamma(0)=p$ and $\gamma'_{+}(0)=v$, and
$t=c_{1}$ is the first time after $t=0$ such that $\gamma(t)\in\partial M.$ 

Let $x,y\in{\rm Int(}M)$. Denote by $L_{m}$ a polygonal path $P_{0}P_{1}\ldots P_{m}P_{m+1}$
of maximal length, subject to the conditions $P_{0}=x,$ $P_{m+1}=y,$
and $P_{i}\in\partial M$ for all $i\in\{1,2,\dots,m\}$. Since $(P_{1},\ldots P_{m})\in\partial M\times\cdots\times\partial M$,
which is a compact set, such an $L_{m}$ exists. Note that all consecutive
vertices of $L_{m}$ must be distinct in order for $L_{m}$ to have
maximal length: If $P_{i}=P_{i+1}$, then by the triangle inequality,
the path $L_{m}'$ formed by moving $P_{i+1}$ slightly away from
$P_{i}$ on $\sigma$ has length longer than $L_{m}$. The following
well-known lemma will be useful in obtaining a collection of distinct
billiard paths from $x$ to $y.$ 
\begin{lem}
\label{maximal length} For $m=1,2,\dots,$ and given points $x,y\in{\rm Int}(M)$,
a maximal-length polygonal path $L_{m}=P_{0}P_{1}\dots P_{m}P_{m+1},$
subject to the conditions $P_{0}=x$, $P_{m+1}=y,$ and $P_{i}\in\partial M$
for all $i\in\{1,2,\dots,m\},$ is a billiard trajectory.\end{lem}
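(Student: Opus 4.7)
The plan is to use the standard variational characterization: a polygonal path with intermediate vertices on $\partial M$ is a billiard trajectory precisely when it is a critical point of the length functional under variations of each interior vertex along $\sigma$. Since $L_m$ is maximal, it is in particular critical, and I just need to check that the first-order condition at each $P_i$ is equivalent to the reflection law in equation~(\ref{eq:reflected vector}).

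Concretely, I would fix an index $i \in \{1,\dots,m\}$ and hold all $P_j$ with $j \neq i$ constant, varying $P_i = \sigma(s)$ with $s$ ranging near $s_i$ (where $\sigma(s_i)=P_i$). Since consecutive vertices of $L_m$ are distinct (as the authors note from the triangle inequality), the function
\[
F(s) \;=\; |\sigma(s) - P_{i-1}| \;+\; |P_{i+1} - \sigma(s)|
\]
is smooth near $s_i$, and its maximality there gives $F'(s_i) = 0$. Writing $u_i := (P_i - P_{i-1})/|P_i - P_{i-1}|$ and $v_i := (P_{i+1} - P_i)/|P_{i+1} - P_i|$, and using $\sigma'(s_i) = |\sigma'(s_i)|\,\mathbf{T}(s_i)$, this reduces to
\[
(u_i - v_i) \cdot \mathbf{T}(s_i) \;=\; 0,
\]
i.e., $u_i$ and $v_i$ have equal tangential components at $P_i$.

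Next I would rule out $v_i = u_i$: because $M$ is strictly convex (positive curvature) and $P_{i-1},P_i$ are distinct with $P_i \in \partial M$, the chord from $P_{i-1}$ to $P_i$ approaches $P_i$ from the interior, giving $u_i \cdot \mathbf{N}(s_i) < 0$; symmetrically, $v_i \cdot \mathbf{N}(s_i) > 0$. Together with $|u_i|=|v_i|=1$ and equal tangential components, this forces the normal components to be opposite, so
\[
v_i \;=\; u_i - 2(u_i \cdot \mathbf{N}(s_i))\,\mathbf{N}(s_i),
\]
which is exactly the reflection rule~(\ref{eq:reflected vector}) at the vertex $P_i$. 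Defining the times $c_0 = 0$ and $c_i = \sum_{j=1}^{i} |P_{j-1} P_j|$ and parametrizing each segment at unit speed then produces a piecewise-linear map $\gamma:[0,c_{m+1}] \to M$ (the segments lie in $M$ by convexity) with the required reflection law at every vertex, $\gamma(0)=x$, and $\gamma(c_{m+1})=y$.

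I do not expect a serious obstacle: the only thing to be mildly careful about is verifying that $F$ is differentiable at $s_i$ (which needs $P_i \neq P_{i\pm 1}$, already established) and that $\alpha_i := \varangle(v_i,\mathbf{T}(s_i)) \in (0,\pi)$, which follows from $v_i \cdot \mathbf{N}(s_i) > 0$. Everything else is a direct computation from the first-order optimality condition.
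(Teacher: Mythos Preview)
Your argument is correct and follows essentially the same variational approach as the paper: both reduce to a single vertex and use the first-order optimality condition for the length $|AZ|+|ZB|$ as $Z$ varies along $\partial M$ to deduce the reflection law. The paper simply cites the Lagrange multiplier principle and \cite{Tab05}, whereas you parametrize the constraint by $\sigma(s)$ and carry out the computation explicitly, including the sign analysis of the normal components via strict convexity.
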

\begin{proof}
It suffices to prove that for any points $A$ and $B$ in $M$, a
polygonal path $AZB,$ with $Z\in\partial M,$ of maximal length must
be a billiard path. This follows from the Lagrange multiplier principle
applied to the function $f(Z)=|AZ|+|ZB|.$ (See pp. 12-13 of \cite{Tab05}.)
\end{proof}
In order to study $D\Phi$ geometrically it is convenient to introduce
$C^{k}$ families of oriented lines in $\mathbb{R}^{2},$ for $k\ge1.$
In our application, we usually take $k=r-1.$
\begin{defn}
\label{linefamily} Let $I\subset\mathbb{R}$ be an compact interval
of positive length, and for each $u\in I,$ suppose that $\ell(u)$
is an oriented line in $\mathbb{R}^{2}.$ For $k\ge1,$ we say that
$\ell(u),u\in I,$  is a $C^{k}$ \emph{family of oriented lines }if
there exist $C^{k}$ functions ${\bf \xi}:I\to\mathbb{R}^{2}$ and
$v:I\to S^{1}$ such that for each $u\in I,$ $\ell(u)=\{\xi(u)+tv(u):t\in\mathbb{R}\},$
and the orientation on $\ell(u)$ is given by $v(u).$ The point $\xi(u)$
and the vector $v(u)$ are the \emph{base point }and the \emph{direction
vector,} respectively, of the line $\ell(u).$ Such a family $\ell(u)$
is said to be \emph{non-degenerate} if the following condition holds:
For each $u\in I,$ if $v'(u)=0,$ then $\xi'(u)$ is not a scalar
multiple of $v(u).$ (It follows that $\xi'(u)$ and $v'(u)$ are
not both $0.)$ In particular, the non-degeneracy condition prevents
the family of oriented lines $\ell(u)$ from consisting of just a
single line. 
\end{defn}
If $\ell(u)=\{\xi(u)+tv(u):t\in\mathbb{R}\},$ $u\in I,$ is a $C^{k}$
family of oriented lines, then we may consider parameter translations
along each $\ell(u)$ by letting $\widetilde{\xi}(u)=\xi(u)+a(u)v(u),$
where $a:I\to\mathbb{R}$ is $C^{k}.$ Note that the definition of
non-degeneracy is satisfied by $(\xi,v)$ if and only if it is satisfied
by $(\xi,v)$ replaced by $(\widetilde{\xi},v).$ Therefore the definition
of non-degeneracy is independent of parameter translations. We may
also reparametrize the family $\ell(u),$ $u\in I,$ as $\ell(\beta(\widetilde{u})),$
$\widetilde{u}\in\widetilde{I},$ where $\widetilde{I}$ is a compact
interval of positive length and $\beta:\widetilde{I}\to I$ is a $C^{k}$
diffeomorphism. The definition of non-degeneracy is also independent
of such a reparametrization.
\begin{defn}
\label{focusing} If $\ell(u),$ $u\in I,$ is a $C^{1}$ family of
lines parametrized by $\ell(u,t)=\xi(u)+tv(u),$ $u\in I,$ $t\in\mathbf{R},$
then the \emph{$local$ envelope} of $\ell(u)$ is defined to be $f(u)=-\left<\xi'(u),v'(u)\right>/\left<v'(u),v'(u)\right>$
wherever $v'(u)\ne0.$ The point $F:=\ell(u_{0},f(u_{0}))$ is said
to be the \emph{focusing point }(in linear approximation) for the
family $\ell(u)$ at $u=u_{0}.$ If $v'(u_{0})=0,$ we say the focusing
point is at infinity for $u=u_{0}.$ (See, e.g., Section 2 of \cite{Woj86}.)
\end{defn}
A straightforward computation shows that the focusing point for the
family $\ell(u)$ at $u=u_{0}$ does not change under parameter translations
along each of the $\ell(u).$ Likewise, if the family $\ell(u),$
$u\in I,$ is reparametrized as $\ell(\beta(\widetilde{u})),$ $\widetilde{u}\in\widetilde{I},$
where $\beta$ is as above, then the focusing point for the family
$\ell(u)$ at $u=u_{0}\in I$ is the same as the focusing point for
the family $\ell(\beta(\widetilde{u}))$ at $\widetilde{u}=\beta^{-1}(u_{0})\in\widetilde{I}.$ 
\begin{lem}
\label{invertible}Suppose $\ell(u),$ $u\in I,$ is a non-degenerate
$C^{1}$ family of lines parametrized by $\ell(u,t)=\xi(u)+tv(u),$
$t\in\mathbb{R}.$ For $(u_{0},t_{0})\in I\times\mathbb{R},$ $D\ell(u_{0},t_{0})$
is invertible if and only if $\ell(u_{0},t_{0})$ is not a focusing
point at $u=u_{0}.$ \end{lem}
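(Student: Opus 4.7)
The plan is a direct computation of the Jacobian, combined with the elementary observation that $v'(u)\perp v(u)$ because $v$ takes values in $S^1$. Write $\ell(u,t)=\xi(u)+tv(u)$, so the partial derivatives are
\[
\partial_u\ell(u_0,t_0)=\xi'(u_0)+t_0 v'(u_0),\qquad \partial_t\ell(u_0,t_0)=v(u_0).
\]
Thus $D\ell(u_0,t_0)$ is invertible if and only if the two vectors $\xi'(u_0)+t_0v'(u_0)$ and $v(u_0)$ are linearly independent in $\mathbb{R}^2$, equivalently, $\xi'(u_0)+t_0v'(u_0)$ is not a scalar multiple of $v(u_0)$.

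Next I would split into two cases according to whether $v'(u_0)$ vanishes. If $v'(u_0)=0$, then $\partial_u\ell(u_0,t_0)=\xi'(u_0)$, and by the non-degeneracy hypothesis $\xi'(u_0)$ is not a scalar multiple of $v(u_0)$, so $D\ell(u_0,t_0)$ is invertible. On the other hand, the focusing point is at infinity when $v'(u_0)=0$, so $\ell(u_0,t_0)$ is never a (finite) focusing point. Both sides of the biconditional therefore hold.

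Suppose now $v'(u_0)\ne 0$. Since $|v(u)|\equiv 1$, differentiating yields $\langle v(u),v'(u)\rangle=0$, so $v'(u_0)$ is a nonzero vector orthogonal to $v(u_0)$, and together $\{v(u_0),v'(u_0)\}$ is an orthogonal basis of $\mathbb{R}^2$. Hence $\xi'(u_0)+t_0v'(u_0)$ is parallel to $v(u_0)$ if and only if its component along $v'(u_0)$ vanishes, that is,
\[
\bigl\langle \xi'(u_0)+t_0v'(u_0),\,v'(u_0)\bigr\rangle=0,
\]
which is equivalent to
\[
t_0=-\frac{\langle \xi'(u_0),v'(u_0)\rangle}{\langle v'(u_0),v'(u_0)\rangle}=f(u_0).
\]
Therefore $D\ell(u_0,t_0)$ fails to be invertible exactly when $t_0=f(u_0)$, i.e., when $\ell(u_0,t_0)$ is the focusing point at $u=u_0$. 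This completes both directions.

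There is no serious obstacle here; the only subtlety is handling the case $v'(u_0)=0$ separately and invoking non-degeneracy to conclude that $\xi'(u_0)\not\parallel v(u_0)$, which is why the non-degeneracy condition is stated precisely in this form.
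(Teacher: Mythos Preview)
Your proof is correct and follows essentially the same approach as the paper's: both compute the Jacobian columns $\xi'(u_0)+t_0v'(u_0)$ and $v(u_0)$, split into the cases $v'(u_0)=0$ and $v'(u_0)\ne 0$, use $\langle v,v'\rangle=0$ in the second case, and identify linear dependence with the focusing condition $t_0=f(u_0)$.
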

\begin{proof}
The derivative of $\ell(u,t)$ is given by
\[
D\ell(u_{0},t_{0})=[\begin{array}{cc}
\xi'(u_{0})+t_{0}v'(u_{0}) & v(u_{0})\end{array}].
\]

Case 1. Suppose $v'(u_{0})=0.$ Then $\ell(u_{0},t_{0})$ is not a
focusing point at $u=u_{0},$ and the columns of $D\ell(u_{0},t_{0})$
are linearly independent, since $v(u_{0})$ is a unit vector and the
non-degeneracy condition implies that $\xi'(u_{0})$ is not a scalar
multiple of $v(u_{0}).$

Case 2. Suppose $v'(u_{0})\ne0.$ Since $\left<v'(u),v(u)\right>\equiv0,$
the columns of $D\ell(u_{0},t_{0})$ are linearly dependent if and
only if the orthogonal projection of $\xi'(u_{0})+t_{0}v'(u_{0})$
onto $v'(u_{0})$ is 0. Thus the columns of $D\ell(u_{0},t_{0})$
are linearly dependent if and only if

\[
\dfrac{\left<\xi'(u_{0})+t_{0}v'(u_{0}),v'(u_{0})\right>}{\left<v'(u_{0}),v'(u_{0})\right>}=0,
\]
which is equivalent to $\ell(u_{0},t_{0})$ being a focusing point
at $u=u_{0}.$ 

\end{proof}
\begin{defn}
\label{reflected lines} Let $\ell(u),$ $u\in I,$ be a $C^{k}$
family of oriented lines, where $1\le k\le r-1.$ Assume that the
line $\ell(u)$ intersects ${\rm Int}(M)$ for each $u\in I.$ Suppose
the lines are parametrized by $\ell(u,t)=\xi(u)+tv(u),$ $t\in\mathbf{R},$
where $\xi:I\to M$ and $v:I\to S^{1}$ are $C^{k}$ functions and
$(\xi(u),v(u))\in T_{\xi(u)}^{1}M,$ for all $u,$ which implies that
$v(u)$ is inward pointing for $M$ at $\xi(u)$ if $\xi(u)\in\partial M.$
The ${\it reflected}\ family\ of\ oriented\ lines,$ $\ell_{1}(u),$
obtained from $\ell(u)$ is defined as follows: For each $u\in I,$
there are exactly two values of $t$, say $t_{a}=t_{a}(u)$ and $t_{b}=t_{b}(u),$
where $t_{a}<t_{b},$ such that $\xi(u)+t_{a}v(u)$ and $\xi(u)+t_{b}v(u)$
are on $\partial M.$ Let $\xi_{1}(u)=\xi(u)+t_{b}(u)v(u)$ and let
$v_{1}(u)=v(u)-2\text{Proj}_{N(\xi_{1}(u))}v(u).$ By (\ref{eq:reflected vector}),
we see that a billiard trajectory along the line $\ell(u)$ (going
in the same direction as the orientation of $\ell(u))$ is reflected
at $\xi_{1}(u)$ on $\partial M,$ so that it continues along the
line $\ell_{1}(u)$ after reflection. The reflected family, $\ell_{1}(u),$
$u\in I,$ is the family of oriented lines that can be parametrized
by $\ell_{1}(u,t)=\xi_{1}(u)+tv_{1}(u),$ $t\in\mathbb{R}.$\end{defn}
\begin{lem}
\label{reflection lemma} Let $M$ be a billiard table in $\mathbb{R}^{2}$
bounded by a curve $\sigma\in\Sigma_{r},$ where $r\ge2.$ Suppose
$\ell(u),$ $u\in I,$ is a $C^{k}$ family of oriented lines, where
$1\le k\le r-1,$ and $\ell(u)$ intersects ${\rm Int}(M)$ for each
$u\in I.$ Then the reflected family, $\ell_{1}(u),$ of oriented
lines obtained from $\ell(u)$ is also $C^{k}.$ Moreover, if the
family $\ell(u)$ is non-degenerate, then so is $\ell_{1}(u).$ \end{lem}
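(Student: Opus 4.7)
The plan splits into two parts: first, establishing $C^{k}$ regularity of $\xi_{1}$ and $v_{1}$, and second, verifying that non-degeneracy is inherited.

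For regularity, I would apply the implicit function theorem to the map $F(u,t,s) = \xi(u)+tv(u)-\sigma(s)$ near the point $(u_{0},t_{b}(u_{0}),s(u_{0}))$, where $s(u_{0})\in S^{1}$ is chosen so that $\sigma(s(u_{0}))=\xi_{1}(u_{0})$. The partial Jacobian of $F$ in $(t,s)$ is the $2\times 2$ matrix with columns $v(u_{0})$ and $-\sigma'(s(u_{0}))$; these are linearly independent because $\sigma\in\Sigma_{r}$ has positive curvature, so every line meeting $\mathrm{Int}(M)$ crosses $\partial M$ transversely at its exit point. Since $F$ is $C^{k}$ (using $k\le r-1$ and $\sigma\in C^{r}$), the IFT yields local $C^{k}$ functions $t_{b}(u)$ and $s(u)$. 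Then $\xi_{1}(u)=\xi(u)+t_{b}(u)v(u)$ is $C^{k}$, and $v_{1}(u)=v(u)-2\langle v(u),\mathbf{N}(s(u))\rangle\mathbf{N}(s(u))$ is also $C^{k}$, using $\mathbf{N}\in C^{r-1}$.

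For non-degeneracy, I would first exploit the remark in the text that non-degeneracy is invariant under parameter translations $\widetilde{\xi}(u)=\xi(u)+a(u)v(u)$. Choosing $a(u)=t_{b}(u)$ slides each base point onto the exit point $\xi_{1}(u)\in\partial M$, so I may assume from the outset that $\xi(u)=\xi_{1}(u)$. Writing $\xi_{1}(u)=\sigma(s(u))$, the derivative $\xi_{1}'(u)=s'(u)\sigma'(s(u))$ is always a scalar multiple of $\mathbf{T}(s(u))$. By contrast, $v_{1}(u)$ is inward-pointing at $\xi_{1}(u)$, and the paper's definition of $T^{1}_{p}M$ for boundary points forbids tangent vectors, so $v_{1}(u)$ is never a scalar multiple of $\mathbf{T}(s(u))$. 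Hence whenever $\xi_{1}'(u_{0})\ne 0$, it cannot be parallel to $v_{1}(u_{0})$, and non-degeneracy of the reflected family at $u_{0}$ follows immediately.

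The remaining and only delicate case is $s'(u_{0})=0$, in which $\xi_{1}'(u_{0})=0$. The key observation is that $(\mathbf{N}\circ s)'(u_{0})=0$ as well, so when one differentiates the reflection formula for $v_{1}$ at $u_{0}$ the terms involving $(\mathbf{N}\circ s)'$ drop out, leaving $v_{1}'(u_{0})$ equal to the orthogonal reflection of $v'(u_{0})$ across the tangent line to $\partial M$ at $\xi_{1}(u_{0})$. Since orthogonal reflection is invertible, the hypothesis $v_{1}'(u_{0})=0$ forces $v'(u_{0})=0$. But then in the reparametrized family $(\xi_{1},v)$ both $\xi_{1}'(u_{0})=0$ and $v'(u_{0})=0$, and the zero vector is trivially a scalar multiple of $v(u_{0})$, contradicting non-degeneracy of the incoming family. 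The main obstacle I anticipate is precisely this edge case: the argument depends on selecting the right parameter translation so that the contradiction can be read off from $\xi_{1}$ rather than $\xi$, and on the automatic cancellation in $v_{1}'(u_{0})$ supplied by $s'(u_{0})=0$.
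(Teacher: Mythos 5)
Your argument is correct and follows essentially the same route as the paper: the only real difference is that you obtain $t_{b}(u)$ and $s(u)$ by applying the implicit function theorem to $\xi(u)+tv(u)=\sigma(s)$ (using transversality of the line with the strictly convex boundary), whereas the paper solves $w\circ\ell(u,t_{b}(u))=0$ for the signed-distance function $w$ of a tubular neighborhood of $\partial M$ --- both hinge on the same non-tangency condition $\left\langle \mathbf{N},v(u)\right\rangle \ne0$. Your non-degeneracy argument (translate the base point to $\xi_{1}(u)$, note that $\xi_{1}'$ is tangent to $\partial M$ while $v_{1}$ is transversal to it, and in the case $\xi_{1}'(u_{0})=0$ observe that $v_{1}'(u_{0})$ is the orthogonal reflection of $v'(u_{0})$ and hence nonzero) is exactly the paper's two-case analysis.
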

\begin{proof}
Let $\xi,\xi_{1},v,v_{1},t_{a},t_{b}$ be as in Definition \ref{reflected lines}.
From the inverse function theorem and the compactness of $S^{1}$
it follows that there exists $\delta>0$ such that the function $\varphi:S^{1}\times(-\delta,\delta)\to\mathbb{R}^{2}$
given by $\varphi(s,w)=\sigma(s)+w{\bf N}(s)$ is injective and defines
a $C^{r-1}$ coordinate system on a neighborhood $\mathcal{N}_{\delta}:=\{p\in\mathbb{R}^{2}:\text{dist}(p,\partial M)<\delta\}.$
This is a special case of the tubular neighborhood theorem (see, e.g.,
\cite{BurGid}). The $w$ coordinate gives the signed distance of
a point in $\mathcal{N}_{\delta}$ to $\partial M,$ with the sign
chosen so that $w<0$ outside $M$ and $w>0$ in ${\rm Int}(M).$
Since $\nabla w={\bf N}(\sigma(s))$ is a $C^{r-1}$ function of $s,$
$w$ is a $C^{r}$ function of the usual $(x,y)$ coordinates on $\mathcal{N}_{\delta}.$
For $(u,t)\in I\times\mathbb{R}$ such that $\ell(u,t)\in\partial M,$
we have $\partial(w\circ\ell)/\partial t=\left\langle {\bf \nabla w},v(u)\right\rangle =\left\langle {\bf N}(\ell(u,t)),v(u)\right\rangle \ne0.$
Therefore, by the implicit function theorem, the function $t_{b}(u)$,
which satisfies the equation $w\circ\ell(u,t_{b}(u))=0,$ is $C^{k}.$
Hence the function $\xi_{1}:I\to\partial M$ defined by $\xi_{1}(u):=\xi(u)+t_{b}(u)v(u)$
is $C^{k}.$ By (\ref{eq:reflected vector}), we have $v_{1}(u):=v(u)-2\text{Proj}_{{\bf N}(\xi_{1}(u))}v(u)=v(u)-2\left\langle {\bf N}(\xi_{1}(u)),v(u)\right\rangle {\bf N}(\xi_{1}(u)),$
which is $C^{k}.$ Therefore, $\ell_{1}(u)$ is $C^{k}.$

Now assume, in addition, that $\ell(u)$ is non-degenerate. Note that
$\ell(u)$ can be reparametrized by $\ell(u,t)=\xi_{1}(u)+tv(u).$
Thus $\xi_{1}(u)$ and $v(u)$ satisfy the condition for non-degeneracy.
We need to prove that $\xi_{1}(u)$ and $v_{1}(u)$ also satisfy this
condition. We have $v_{1}'(u)=v'(u)-2\left\langle \xi_{1}'(u){\bf N}'(\xi_{1}(u)),v(u)\right\rangle {\bf N}(\xi_{1}(u))-2\left\langle {\bf N}(\xi_{1}(u)),v'(u)\right\rangle {\bf N}(\xi_{1}(u))-2\left\langle {\bf N}(\xi_{1}(u)),v(u)\right\rangle ({\bf N}'(\xi_{1}(u))(\xi_{1}'(u)).$
Since $\xi_{1}(u)\in\partial M,$ we consider the following two cases
for each $u_{0}\in I$:
\begin{enumerate}
\item Suppose $\xi_{1}'(u_{0})$ is a non-zero scalar multiple of ${\bf {\bf T}=T}(\xi_{1}(u_{0})).$
Then $\xi_{1}'(u_{0})$ is not a multiple of $v_{1}(u_{0})$, since
the line $\ell(u_{0})$, as well as its reflection $\ell_{1}(u_{0}),$
intersects Int$(M)$, which implies that $v_{1}(u_{0})$ and ${\bf T}$
are transversal.
\item Suppose $\xi_{1}'(u_{0})=0.$ Then by the non-degeneracy of $\ell(u),\ \ \ \ \ \ $
$v'(u_{0})\ne0.$ The formula for $v_{1}'(u_{0})$ simplifies to
\[
\begin{array}{ccc}
v_{1}'(u_{0}) & = & v'(u_{0})-2\left\langle {\bf N}(\xi_{1}(u_{0})),v'(u_{0})\right\rangle {\bf N}(\xi_{1}(u_{0}))\\
 & = & v'(u_{0})-2\text{Proj}_{{\bf {\bf N}}(\xi_{1}(u_{0}))}v'(u_{0})\ \ \ \ \ \ \ \ \ \ \ \ \ \ \\
 & = & \text{Proj}_{{\bf T}(\xi_{1}(u_{0}))}v'(u_{0})-\text{Proj}_{{\bf N}(\xi_{1}(u_{0}))}v'(u_{0}).
\end{array}
\]
 Thus in the $({\bf T},{\bf N})$ coordinates $v_{1}'(u_{0})$ is
obtained from $v'(u_{0})$ by changing the sign of the ${\bf N}$
coordinate. Therefore $v_{1}'(u_{0})\ne0.$ 
\end{enumerate}

Therefore the family $\ell_{1}(u)$ is non-degenerate.

\end{proof}
We now give explicit formulations of the standard $C^{k}$ distance
between functions, families of lines, and curves. For later use, we
also include a definition of distance between oriented polygonal paths.
\begin{defn}
If $K$ is a compact subset of $\mathbb{R}^{n}$ and $f,\widetilde{f}:K\rightarrow\mathbb{R}^{m}$
are $C^{k}$ maps, then the $C^{k}$ distance between $f$ and $\widetilde{f}$
is defined to be 
\begin{equation}
{\rm dist}_{C^{k}(K,\mathbb{R}^{m})}(f,\widetilde{f}):=\sup_{j\in\{0,1,\dots,k\}}\ \sup_{s\in K}|D^{(j)}f(s)-D^{(j)}\widetilde{f}(s)|.\label{eq:C^rDistance}
\end{equation}
If $\ell(u,t)=\xi(u)+tv(u)$ and $\widetilde{\ell}(u,t)=\widetilde{\xi}(u)+t\widetilde{v}(u),$
$u\in I$, $t\in\mathbb{R},$ are two parametrized families of lines
where $\xi,\widetilde{\xi},v,\widetilde{v}$ are $C^{k}$ functions,
then the $C^{k}$ distance between $\ell$ and $\widetilde{\ell}$
depends on the parametrizations and is defined to be
\[
d_{k}(\ell,\widetilde{\ell}):=\max\ \{{\rm dist}_{C^{k}(I,\mathbb{R}^{2})}(\xi,\widetilde{\xi}),{\rm dist}_{C^{k}(I,\mathbb{R}^{2})}(v,\widetilde{v})\}.
\]
If $\sigma$ and $\widetilde{\sigma}$ are simple closed $C^{k}$
curves in the plane, we define the $C^{k}$ distance geometrically
as follows. Let $f_{p},\widetilde{f}:S^{1}\rightarrow\mathbb{R}^{2}$
be constant speed parametrizations of $\sigma$ and $\widetilde{\sigma}$,
respectively, with $f_{p}(0)=p\in\sigma.$ We regard $S^{1}=\mathbb{R}/\mathbb{\mathbb{Z}}$
to be $[0,1]$ with the endpoints identified and apply formula (\ref{eq:C^rDistance})
to $f_{p}$ and $\widetilde{f}$ with $K=[0,1]:$ 

\[
d_{k}(\sigma,\widetilde{\sigma}):=\inf_{p\in\sigma}d_{k}(f_{p},\widetilde{f}).
\]
For $0\le k\le r,$ we refer to the topology on $\Sigma_{r}$ induced
by the metric $d_{k}$ as the $C^{k}$ topology on $\Sigma_{r.}$ 
\end{defn}
If $\gamma=P_{0}P_{1}\cdots P_{m+1}$ and $\widehat{\gamma}=\widehat{P}_{0}\widehat{P}_{1}\cdots\widehat{P}_{m+1}$
are (oriented) polygonal paths in $\mathbb{R}^{2},$ as defined in
Section 2, then the distance between $\gamma$ and $\widehat{\gamma}$
is 
\[
d(\gamma,\widehat{\gamma}):=\max\{{\rm dist}(P_{i},\widehat{P}_{i}):i=0,\dots,m+1\}.
\]
We assume that $P_{i},P_{i+1},P_{i+2}$ are noncollinear and $\widehat{P}_{i},\widehat{P}_{i+1},\widehat{P}_{i+2}$
are noncollinear for $i=0,\dots,m-1,$ so that the points $P_{0},P_{1},\dots,P_{m+1}$
and the points $\widehat{P}_{0},\widehat{P}_{1},\dots,\widehat{P}_{m+1}$
are uniquely determined by $\gamma$ and $\widehat{\gamma},$ respectively. 

The following lemma is a ``perturbed'' version of the first part
of Lemma \ref{reflection lemma}.
\begin{lem}
\label{perturbed}Let $r\ge2,$ $\sigma\in\Sigma_{r},$ and $M=M(\sigma)$
be the billiard table bounded by $\sigma.$ Suppose $\ell(u,t)=\xi(u)+tv(u),$
$u\in I,$ $t\in\mathbb{R}$ is a $C^{k}$ parametrized family of
lines with $1\le k\le r-1$ such that for each $u\in I$ the line
$\ell(u):=\ell(u,\cdot)$ intersects $\text{Int}(M).$ Then for every
$\epsilon>0$ there exists $\delta>0$ such that if $\widetilde{\sigma}\in\Sigma_{r}$
and $\widetilde{\ell}(u,t)=\widetilde{\xi}(u)+t\widetilde{v}(u),$
$u\in I,$ $t\in\mathbb{R}$ is a $C^{k}$ parametrized family of
lines, then the family $\ell_{1}(u,t)=\xi_{1}(u)+tv_{1}(u),$ obtained
by reflecting $\ell(u,t)$ from $\sigma,$ and the family $\widetilde{\ell}_{1}(u,t)=\widetilde{\xi}_{1}(u)+t\widetilde{v}_{1}(u)$,
obtained by reflecting $\widetilde{\ell}(u,t)$ from $\widetilde{\sigma},$
satisfy $d_{k}(\ell_{1},\widetilde{\ell}_{1})<\epsilon$ whenever
$d_{r}(\sigma,\widetilde{\sigma})<\delta$ and $d_{k}(\ell,\widetilde{\ell})<\delta.$
Here we require $\xi_{1}$ and $\widetilde{\xi}_{1}$ to be chosen,
as in the proof of Lemma \ref{reflection lemma}, so that $\xi_{1}(u)$
and $\widetilde{\xi}_{1}(u)$ lie on the images of $\sigma$ and $\widetilde{\sigma},$
respectively.\end{lem}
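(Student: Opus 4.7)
The plan is to retrace the proof of Lemma \ref{reflection lemma} and argue at each stage that the $C^{k}$ quantities produced depend continuously on the input data in the appropriate topology. The construction of $\ell_{1}$ from $\ell$ and $\sigma$ involves two operations: first, locating the exit intersection $\xi_{1}(u)=\ell(u,t_{b}(u))$ of $\ell(u)$ with $\sigma$, and second, applying the reflection formula $v_{1}(u)=v(u)-2\left\langle {\bf N}(\xi_{1}(u)),v(u)\right\rangle {\bf N}(\xi_{1}(u))$. It will suffice to prove that both $u\mapsto t_{b}(u)$ and the composed inward normal field $u\mapsto{\bf N}(\xi_{1}(u))$ depend $C^{k}$-continuously on $(\sigma,\xi,v)$ in a neighborhood of the reference data.

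For the first step, I will apply a parametric implicit function theorem to the map $F(u,s,t;\sigma,\xi,v):=\xi(u)+tv(u)-\sigma(s)$, whose vanishing determines the intersections of $\ell(u)$ with $\sigma$. At the reference solution $(s,t)=(s_{b}(u),t_{b}(u))$, the Jacobian of $F$ with respect to $(s,t)$ is the $2\times 2$ matrix with columns $-\sigma'(s_{b}(u))$ and $v(u)$. This matrix is nonsingular: because $\ell(u)$ meets $\mathrm{Int}(M)$ and $\sigma$ has positive curvature, the line is transverse to $\sigma$ at its exit point, so $v(u)$ is not a scalar multiple of $\sigma'(s_{b}(u))$. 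By compactness of $I$ the operator norm of the inverse is uniformly bounded in $u$. A standard parametric IFT then yields, on a neighborhood of $(\sigma,\xi,v)$ in $\Sigma_{r}\times C^{k}(I;\mathbb{R}^{2})\times C^{k}(I;S^{1})$, a $C^{k}$-continuous assignment $(\widetilde{\sigma},\widetilde{\xi},\widetilde{v})\mapsto(\widetilde{s}_{b},\widetilde{t}_{b})$ solving the perturbed equation $\widetilde{F}=0$; by continuity $\widetilde{t}_{b}(u)$ automatically picks out the exit intersection (the larger root) rather than the entry root. Consequently $\widetilde{\xi}_{1}(u)=\widetilde{\xi}(u)+\widetilde{t}_{b}(u)\widetilde{v}(u)=\widetilde{\sigma}(\widetilde{s}_{b}(u))$ is $C^{k}$-close to $\xi_{1}(u)$ once $d_{r}(\widetilde{\sigma},\sigma)$ and $d_{k}(\widetilde{\ell},\ell)$ are taken small enough.

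For the second step, the hypothesis $d_{r}(\widetilde{\sigma},\sigma)<\delta$ forces the inward unit normal $\widetilde{{\bf N}}$ along $\widetilde{\sigma}$ to be $C^{r-1}$-close to ${\bf N}$ as a function of the constant-speed parameter on $S^{1}$; composing with the $C^{k}$-close function $\widetilde{s}_{b}$ and using $k\le r-1$ yields $C^{k}$-closeness of $u\mapsto\widetilde{{\bf N}}(\widetilde{\xi}_{1}(u))$ to $u\mapsto{\bf N}(\xi_{1}(u))$. Substituting into the reflection formula gives $d_{k}(\widetilde{v}_{1},v_{1})<\epsilon$ after shrinking $\delta$, as required. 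The main obstacle will be the careful invocation of the parametric IFT with uniform-in-$u$ estimates and $C^{k}$-dependence on the infinite-dimensional parameter $(\widetilde{\sigma},\widetilde{\xi},\widetilde{v})$; this reduces to verifying that the contraction used in the IFT fixed-point argument is Lipschitz in these parameters with respect to the $C^{k}$ norms, which follows from the uniform invertibility of the Jacobian established above together with the joint $C^{k}$-continuity of $F$ and its first derivatives in all arguments.
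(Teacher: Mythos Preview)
Your proposal is correct and follows essentially the same approach as the paper: both argue that the construction in Lemma~\ref{reflection lemma} depends continuously on the data by applying the implicit function theorem to locate the exit intersection and then invoking the explicit reflection formula for $v_{1}$. The only cosmetic difference is that the paper applies the IFT to the scalar equation $w\circ\ell(u,t)=0$, where $w$ is the signed-distance function from the tubular-neighborhood construction, whereas you apply it to the two-dimensional system $\xi(u)+tv(u)-\sigma(s)=0$; these are equivalent formulations, and your transversality check (that $v(u)$ is not tangent to $\sigma$ at the exit point) plays the same role as the paper's observation that $\partial(w\circ\ell)/\partial t\neq 0$.
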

\begin{proof}
We use the same notation as in the proof of Lemma \ref{reflection lemma},
with all objects obtained from $\widetilde{\sigma}$ and $\widetilde{\ell}$
in an analogous way to objects obtained from $\sigma$ and $\ell$
being given a tilde over them. Let $\ell(u,t)=\xi(u)+tv(u)$ and $\sigma$
be as in the statement of the lemma. By taking $\delta$ sufficiently
small, we may assume that each $\widetilde{\ell}(u)$ intersects $\text{Int}(\widetilde{M}).$
By following the proof of Lemma \ref{reflection lemma}, we see that
for small $\delta,$ the signed distances $\widetilde{w}$ and $w$
from $\widetilde{\sigma}$ and $\sigma$ are both defined in a neighborhood
of $\sigma,$ $\nabla\widetilde{w}=\widetilde{N}\circ\widetilde{\sigma}$
is $C^{r-1}$ close to $\nabla w=N\circ\sigma,$ and $\widetilde{w}$
is $C^{r}$ close to $w.$ It then follows that the functions $t_{b}(u)$
and $\widetilde{t}_{b}(u)$ obtained from the implicit function theorem
such that $(\widetilde{w}\circ\widetilde{\ell})(u,\widetilde{t}_{b}(u))=0=(w\circ\ell)(u,t_{b}(u))$
are $C^{r}$ close. Using the procedure of Lemma \ref{reflection lemma}
to find $\xi_{1},\widetilde{\xi}_{1},v_{1},\widetilde{v}_{1},$ we
see that for sufficiently small $\delta,$ $d_{k}(\ell,\widetilde{\ell}_{1})<\epsilon.$
\end{proof}

\section{Nonconjugacy Condition for Billiard Paths}

In this section we discuss the conjugacy condition, which has proved
to be useful in the theory of billiards. (See \cite{Bis03} for further
information on the adaptation of this condition, as well as other
related notions from differential geometry, to the study of billiards.)
In Corollary \ref{cor: Neigborhood filling}, we present a consequence
of nonconjugacy in the form that we need in Section 5. 

Let the billiard table $M$ be as before, with boundary curve $\sigma\in\Sigma_{r},$
where $r\ge2.$ 

Suppose $\ell(u),$ $u\in I,$ is a $C^{k}$ family of oriented lines,
where $1\le k\le r-1,$ such that each $\ell(u)$ intersects ${\rm Int(}M),$
and $\widetilde{\ell}(u),$ $u\in I,$ is the reflected family of
lines as in Definition \ref{reflected lines}. Suppose the parametrizations
of $\ell(u,t)=\xi(u)+tv(u)$ and $\widetilde{\ell}(u,t)=\widetilde{\xi}(u)+t\widetilde{v}(u)$
are chosen so that for some $u_{0}\in I,$ $\xi(u_{0})=\widetilde{\xi}(u_{0})$,
and $\xi(u_{0})$ lies on $\sigma.$ Let $f,\widetilde{f}\in\mathbb{R}$
be such that $F=\ell(u_{0},f)$ and $\widetilde{F}=\widetilde{\ell}(u_{0},\widetilde{f})$
are the focusing points of the families $\ell$ and $\widetilde{\ell},$
respectively, at $u=u_{0}.$ Let $\kappa$ be the curvature of $\sigma$
at $\xi(u_{0}),$ and let $\alpha$ be the angle that $v(u_{0})$
makes with the tangent vector ${\bf T}$ to $\sigma$ at $\xi(u_{0}).$
Then according to the mirror equation (see, e.g., Theorem 5.28 in
\cite{Tab05} or Lemma 1 in Section 2 of \cite{Woj86}), 
\begin{equation}
-\frac{1}{f}+\frac{1}{\widetilde{f}}=\frac{2\kappa}{\sin\alpha}.\label{eq:Mirror}
\end{equation}
 Note that $\sin\alpha\ne0$ for a billiard table whose boundary has
positive curvature. In case $f$ or $\widetilde{f}$ is equal to $\infty,$
we interpret $\frac{1}{\infty}$ as $0.$ If either of $f$ or $\widetilde{f}$
is $0,$ then the other one is also $0$.
\begin{defn}
\label{conjugate} Let $p$ and $q$ be points in $M$, and let $\tau$
be a billiard path with initial point $p$, final point $q,$ and
$m$ reflections between the initial and final points. Let $\ell_{0}$
be the oriented line determined by the initial segment of $\tau.$
Then $p$ and $q$ are said to be \textit{conjugate along} $\tau$
\textit{in} $M$ if the following holds: There exists a non-degenerate
$C^{1}$ family of lines $\ell_{0}(u),$ $u\in I,$ with $\ell_{0}(u_{0})=\ell_{0}$
for some $u_{0}\in I$, such that the families $\ell_{0}(u),\ell_{1}(u),\dots,\ell_{m}(u),$
where $\ell_{i}(u)$ is obtained by reflecting $\ell_{i-1}(u)$ from
$\partial M$ (as in Definition \ref{reflected lines}) for $i=1,\dots,m,$
are such that $\ell_{0}(u)$ and $\ell_{m}(u)$ focus at $p$ and
$q,$ respectively, at $u=u_{0}.$ 
\end{defn}
According to Corollary \ref{cor: Neigborhood filling} below, if $\tau$
is a billiard path from $p$ to $q$ for a given table such that $p$
and $q$ are not conjugate along $\tau$, and we make a small $C^{2}$
perturbation of the table and a small perturbation $q_{1}$ of $q,$
then there is a unique billiard path from $p$ to $q_{1}$ in the
new table that is close to $\tau$. For this we need the following
perturbed version of the inverse function theorem. Its proof is the
same as that of the usual inverse function theorem (see, e.g., \cite{Rud76}). 
\begin{lem}
\label{lemma inverse function theorem} Let $K=\overline{B_{\epsilon_{0}}(x_{0})}\subset\mathbb{R}^{2},$
where $B_{\epsilon_{0}}(x_{0})$ denotes the open ball of radius $\epsilon_{0}$
about some point $x_{0}\in\mathbb{R}^{2}.$ Suppose $f:K\rightarrow\mathbb{R}^{2}$
is $C^{1}$ and $Df(x_{0})$ is invertible. Let $y_{0}=f(x_{0}).$
Then there exist $\eta,\delta>0$ and $0<\epsilon<\epsilon_{0},$
such that for any $C^{1}$ function $g:K\rightarrow\mathbb{\mathbb{R}}^{2}$
with $\textrm{dist}_{C^{1}(K,\mathbb{R}^{2})}(f,g)<\eta,$ we have
$g$ one-to-one on $B_{\epsilon}(x_{0})$ and $B_{\delta}(y_{0})\subset g(B_{\epsilon}(x_{0}))$. \end{lem}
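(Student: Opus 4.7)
The plan is to mimic the standard proof of the inverse function theorem via the contraction mapping principle, keeping careful track of how small the $C^1$ perturbation must be in order for the estimates to survive uniformly in $g$. Set $A := Df(x_0)$ and $\lambda := \|A^{-1}\|$. By continuity of $Df$, first shrink the radius to some $\epsilon \in (0,\epsilon_0)$ so that $\|Df(x) - A\| < 1/(4\lambda)$ for every $x \in \overline{B_\epsilon(x_0)}$. For any $C^1$ function $g$ with $\operatorname{dist}_{C^1(K,\mathbb{R}^2)}(f,g) < \eta$, where $\eta$ is to be chosen but at least $\eta \le 1/(4\lambda)$, the triangle inequality then gives $\|Dg(x) - A\| < 1/(2\lambda)$ throughout $\overline{B_\epsilon(x_0)}$.

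For each $y \in \mathbb{R}^2$ introduce the auxiliary map $\phi_y^g(x) := x + A^{-1}(y - g(x))$, whose fixed points in $\overline{B_\epsilon(x_0)}$ are precisely the solutions of $g(x) = y$. Differentiating yields $D\phi_y^g(x) = A^{-1}(A - Dg(x))$, so $\|D\phi_y^g(x)\| \le 1/2$ there and, by the mean value inequality, $\phi_y^g$ is a $\tfrac{1}{2}$-contraction on $\overline{B_\epsilon(x_0)}$. Since distinct fixed points of $\phi_y^g$ are impossible, $g$ is automatically one-to-one on $B_\epsilon(x_0)$.

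It remains to arrange that $\phi_y^g$ maps $\overline{B_\epsilon(x_0)}$ into itself for every $y$ in some prescribed ball around $y_0$. Writing $\phi_y^g(x) - x_0 = [\phi_y^g(x) - \phi_y^g(x_0)] + A^{-1}(y - g(x_0))$ and applying the contraction bound to the first bracket gives $|\phi_y^g(x) - x_0| \le \epsilon/2 + \lambda|y - g(x_0)|$. The only real subtlety is that $g$ need not satisfy $g(x_0) = y_0$, but $|g(x_0) - y_0| \le \eta$, so choosing $\delta := \epsilon/(4\lambda)$ and imposing the additional requirement $\eta \le \epsilon/(4\lambda)$ forces $\lambda|y - g(x_0)| \le \lambda(\delta + \eta) \le \epsilon/2$ for every $y \in B_\delta(y_0)$. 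The Banach fixed point theorem then produces a unique $x \in B_\epsilon(x_0)$ with $g(x) = y$, yielding $B_\delta(y_0) \subset g(B_\epsilon(x_0))$ and completing the proof. There is no genuine obstacle beyond bookkeeping: the heart of the matter is that both the contraction constant $1/2$ and the self-map property persist uniformly over the entire $\eta$-neighborhood of $f$.
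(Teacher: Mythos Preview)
Your argument is correct and is precisely what the paper intends: the paper does not give a detailed proof but simply remarks that it is the same as the standard proof of the inverse function theorem in Rudin, which is exactly the contraction-mapping argument you have written out. Your bookkeeping of the constants $\eta$ and $\delta$ to handle the shift $g(x_0)\neq y_0$ is the only extra ingredient beyond Rudin, and you have carried it out cleanly.
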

\begin{cor}
\label{cor: Neigborhood filling} Let $p$ and $q$ be points in $M=M(\sigma),$
where $\sigma\in\Sigma_{2}.$ Let $\tau$ be a billiard path in $M$
from $p$ to $q$ that makes $m\ge0$ reflections between $p$ and
$q$. Assume that $p$ and $q$ are not conjugate along $\tau$ in
$M.$ Let $\epsilon_{1}>0.$ Then there exist $\eta,\delta>0$ and
$0<\epsilon<\epsilon_{1}$ such that the following implication holds:
If
\begin{enumerate}
\item $\widetilde{\sigma}\in\Sigma_{2},$
\item $d_{2}(\sigma,\widetilde{\sigma})<\eta,$
\item $p,q_{1}\in\widetilde{M},$ where $\widetilde{M}=M(\widetilde{\sigma}),$
\item ${\rm dist}(q,q_{1})<\delta$,
\end{enumerate}
then there is a \emph{unique} billiard path for $\widetilde{M}$ starting
at $p$ and making angle less than $\epsilon$ with $\tau$ at $p$
that passes through $q_{1}$ after reflection number $m$ and before
(or at) reflection number $m+1$ from $\partial\widetilde{M}.$ \end{cor}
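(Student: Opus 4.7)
The plan is to set up the problem as an application of the perturbed inverse function theorem (Lemma \ref{lemma inverse function theorem}) to the ``endpoint map'' $\ell_m(u,t)$ sending an initial direction parameter $u$ and a length parameter $t$ along the $m$-th reflected line to its position in the plane. First I parametrize the pencil of oriented lines through $p$ by choosing a smooth diffeomorphism $v:I\to S^{1}$ from a small interval $I$ onto a neighborhood of the initial direction of $\tau$, and setting $\ell_{0}(u,t)=p+tv(u)$, with $u_{0}\in I$ corresponding to the initial direction of $\tau$. Because $\xi_{0}(u)\equiv p$ gives $\xi_{0}'(u)\equiv 0$ while $v'(u)\ne 0$, this family is non-degenerate in the sense of Definition \ref{linefamily}, and it focuses at $p$ at every $u$. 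Iterating Lemma \ref{reflection lemma} $m$ times (shrinking $I$ if necessary so each reflection is well defined) yields $C^{1}$ non-degenerate families $\ell_{1},\dots,\ell_{m}$.

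Let $t^{*}$ satisfy $\ell_{m}(u_{0},t^{*})=q$. The non-conjugacy hypothesis, applied to the specific non-degenerate family constructed above (which focuses at $p$ at $u_{0}$ by construction), forces $\ell_{m}$ \emph{not} to focus at $q$ at $u_{0}$. Lemma \ref{invertible} then implies that $D\ell_{m}(u_{0},t^{*})$ is invertible. I now apply Lemma \ref{lemma inverse function theorem} to $f=\ell_{m}$ with $x_{0}=(u_{0},t^{*})$ and $y_{0}=q$, choosing $\epsilon_{0}$ small enough that $\epsilon_{0}<\epsilon_{1}$ and that for every $(u,t)\in\overline{B_{\epsilon_{0}}(x_{0})}$, $t$ lies strictly between the reflection times $t_{m}(u)$ and $t_{m+1}(u)$ (or at $t_{m+1}(u)$, in the edge case that $q\in\partial M$); continuity of these reflection times in $u$ makes this possible. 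The lemma then produces $\eta',\delta,\epsilon$ such that every $C^{1}$ map within distance $\eta'$ of $\ell_{m}$ on $\overline{B_{\epsilon_{0}}(x_{0})}$ is injective on $B_{\epsilon}(x_{0})$ with image containing $B_{\delta}(q)$.

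It remains to convert a $C^{2}$-closeness hypothesis on $\widetilde{\sigma}$ into $C^{1}$-closeness of the map $\widetilde{\ell}_{m}$ obtained by iterated reflection in $\widetilde{\sigma}$. Since the pencil $\ell_{0}$ through $p$ does not depend on the boundary, one has $\widetilde{\ell}_{0}=\ell_{0}$. I then apply Lemma \ref{perturbed} with $r=2$ and $k=1$ a total of $m$ times: at step $j$, the inputs are the $C^{1}$-close pair $(\ell_{j-1},\widetilde{\ell}_{j-1})$ and the $C^{2}$-close pair $(\sigma,\widetilde{\sigma})$, and the output is a quantitative $C^{1}$-closeness bound on $(\ell_{j},\widetilde{\ell}_{j})$. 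Telescoping these $m$ estimates backwards from the target $\eta'$ produces a single $\eta>0$ such that $d_{2}(\sigma,\widetilde{\sigma})<\eta$ implies $d_{1}(\ell_{m},\widetilde{\ell}_{m})<\eta'$ on $\overline{B_{\epsilon_{0}}(x_{0})}$. Combining with the previous paragraph, every $q_{1}\in B_{\delta}(q)$ has a unique preimage $(u,t)\in B_{\epsilon}(x_{0})$ under $\widetilde{\ell}_{m}$, and this $u$ determines a unique billiard path in $\widetilde{M}$ from $p$ reaching $q_{1}$ between reflections $m$ and $m+1$ (or at reflection $m+1$). Shrinking $\epsilon_{0}$ at the outset transfers the $u$-window $|u-u_{0}|<\epsilon$ to an initial-angle window of size at most $\epsilon$ via the smoothness of $v$.

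The main obstacle is the bookkeeping of the iterated perturbation estimate in Lemma \ref{perturbed}: each step consumes a $C^{1}$-tolerance on $\widetilde{\ell}_{j-1}$ to produce one on $\widetilde{\ell}_{j}$, so the final $\eta$ depends on $m$ via a telescoping argument. Secondary technical care is needed to ensure that, for $\widetilde{\sigma}$ sufficiently close to $\sigma$ and $(u,t)\in B_{\epsilon_{0}}(x_{0})$, the perturbed trajectory actually experiences exactly $m$ reflections before time $t$; this follows from the uniform transversality of $v_{j}(u)$ to $\widetilde{N}(\widetilde{\xi}_{j}(u))$ guaranteed by the $C^{1}$-closeness produced above, together with the implicit-function-theorem argument already used inside the proof of Lemma \ref{reflection lemma}.
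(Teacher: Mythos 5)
Your construction is the same as the paper's: the pencil of lines through $p$ parametrized by signed angle, $m$-fold reflection via Lemma \ref{reflection lemma}, non-conjugacy together with Lemma \ref{invertible} to invert $D\ell_{m}$ at $(u_{0},t^{*})$, the perturbed inverse function theorem (Lemma \ref{lemma inverse function theorem}), and iterated use of Lemma \ref{perturbed} to convert $C^{2}$-closeness of $\widetilde{\sigma}$ into $C^{1}$-closeness of $\widetilde{\ell}_{m}$. The existence half of the conclusion is correctly established by this route.

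There is, however, a gap in the uniqueness half. What your argument delivers is that $q_{1}$ has a unique preimage under $\widetilde{\ell}_{m}$ \emph{inside the small ball} $B_{\epsilon}(x_{0})$. What the corollary asserts is that there is a unique billiard path whose initial angle is within $\epsilon$ of that of $\tau$ and which meets $q_{1}$ anywhere on its segment between reflections $m$ and $m+1$; that segment corresponds to the full parameter interval from $t_{m}(u)$ to $t_{m+1}(u)$, which is in general much longer than the $t$-window covered by $B_{\epsilon}(x_{0})$. Injectivity on the small ball does not exclude a second solution $(u',t')$ with $|u'-u_{0}|<\epsilon$ but $t'$ far from $t^{*}$. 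The paper closes this with an extra quantitative step: since the parametrization is at unit speed, $d(\ell_{m}(u_{0},t),q)=|t-t^{*}|\ge\epsilon_{2}$ for $t\notin(t^{*}-\epsilon_{2},t^{*}+\epsilon_{2})$, so after shrinking $\epsilon$ and $\eta$ one gets $d(\widetilde{\ell}_{m}(u,t),q)\ge\epsilon_{2}/2$ for all such $t$ and all admissible $u$; taking $\delta<\epsilon_{2}/2$ then forces every solution of $\widetilde{\ell}_{m}(u,t)=q_{1}$ with $|u-u_{0}|<\epsilon$ into the ball on which $\widetilde{\ell}_{m}$ is injective. You need this step (or an equivalent one) to justify the word ``unique'' in the statement, and the uniqueness is genuinely used later (e.g., in Lemma \ref{only once}).
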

\begin{proof}
If $m=0,$ the result clearly holds, so assume $m>0.$ Let $\ell_{0},\ell_{1},...,\ell_{m}$
be the oriented lines determined by the segments of $\tau$. Let $\ell_{0}(u)$
be the non-degenerate $C^{1}$ family of lines that pass through $p$
and whose direction vectors $v(u)$ are such that the signed angle
from the initial tangent vector of $\tau$ to $v(u)$ is $u.$ For
$i=1,\dots,m$, let $\ell_{i}(u)$ be the family of lines obtained
by reflecting $\ell_{i-1}(u)$ from $\sigma$. Suppose $\widetilde{\sigma}$
is a $C^{2}$ perturbation of $\sigma$. Let $\widetilde{\ell}_{0}(u)=\ell_{0}(u)$,
and for $i=1,\dots,m$, let $\widetilde{\ell}_{i}(u)$ be the family
of lines obtained by reflecting $\widetilde{\ell}_{i-1}(u)$ from
$\widetilde{\sigma}$. Let $t_{0}>0$ be such that $\ell_{m}(0,t_{0})=q$.
Let $\epsilon_{0},\eta_{1}>0$ be sufficiently small so that $(p,v(u))\in T^{1}M\cap T^{1}\widetilde{M}$
if $|u|<\epsilon_{0},$ $d_{2}(\sigma,\widetilde{\sigma})<\eta_{1},$
and $p\in\widetilde{M}.$ (This restriction of $u$ is only needed
if $p\in\partial M.$) 

Let $\epsilon_{1}>0$. Let $f(u,t)=\ell_{m}(u,t)$ and let $x_{0}=(0,t_{0}).$
We may assume that $\epsilon_{1}<\min(t_{0}/2,\epsilon_{0}).$ Let
$K:=\overline{B_{\epsilon_{1}}(x_{0})}\subset(-\epsilon_{0},\epsilon_{0})\times(t_{0}/2,3t_{0}/2),$
which is contained in the domain of $f.$ Since $p$ is a focusing
point for $\ell_{0}(u)$ at $u=0,$ it follows from the nonconjugacy
assumption and Lemma \ref{invertible} that $Df(x_{0})$ is invertible.
Let $g(u,t)=\widetilde{\ell}_{m}(u,t)$. By Lemma \ref{lemma inverse function theorem}
there exist $\epsilon_{2}\in(0,\epsilon_{1})$ and $\alpha>0$ such
that ${\rm dist}_{C^{1}(K,\mathbb{R}^{2})}(f,g)<\alpha$ implies that
$\widetilde{\ell}_{m}$ is one-to-one on $B_{2\epsilon_{2}}(x_{0}).$
By repeated application of Lemma \ref{perturbed}, there exists $\eta_{2}\in(0,\eta_{1})$
such that ${\rm dist}_{C^{1}(K,\mathbb{R}^{2})}(f,g)<\alpha$ whenever
$d_{2}(\sigma,\widetilde{\sigma})<\eta_{2}.$ 

Note that $d(\ell_{m}(0,t),q)\ge\epsilon_{2}$ for $t\notin(t_{0}-\epsilon_{2},t_{0}+\epsilon_{2})$.
Thus, there exist $\epsilon\in(0,\epsilon_{2})$ and $\eta_{3}\in(0,\eta_{2})$
such that for $|u|<\epsilon$ and $d_{2}(\sigma,\widetilde{\sigma})<\eta_{3},$
we have $d(\widetilde{\ell}_{m}(u,t),q)\ge\epsilon_{2}/2$ whenever
$t\notin(t_{0}-\epsilon_{2},t_{0}+\epsilon_{2}).$ By Lemma \ref{perturbed}
and Lemma \ref{lemma inverse function theorem}, there exist $\eta\in(0,\eta_{3})$
and $\delta\in(0,\epsilon_{2}/2)$ such that if $d_{2}(\sigma,\widetilde{\sigma})<\eta$
and $q_{1}\in\widetilde{M}$ with ${\rm dist}(q,q_{1})<\delta,$ then
there is a point $(u_{1},t_{1})\in B_{\epsilon}(x_{0})$ with $\widetilde{\ell}_{m}(u_{1},t_{1})=q_{1}.$
This proves the existence of a billiard path for $\widetilde{M}$
with the required properties. 

The point $(u_{1},t_{1})$ is the unique point in $B_{2\epsilon_{2}}(x_{0})$
with $\widetilde{\ell}_{m}(u_{1},t_{1})=q_{1},$ because $\widetilde{\ell}_{m}$
is one-to-one on $B_{2\epsilon_{2}}(x_{0}).$ Now suppose $|u|<\epsilon$,
$t\in\mathbb{R}$, and $\widetilde{\ell}_{m}(u,t)=q_{1}.$ Since $d(\widetilde{\ell}_{m}(u,t),q)\ge\epsilon_{2}/2>\delta$
for $t\notin(t_{0}-\epsilon_{2},t_{0}+\epsilon_{2})$, we must have
$t\in(t_{0}-\epsilon_{2},t_{0}+\epsilon_{2}).$ But then $(u,t)\in B_{2\epsilon_{2}}(x_{0}),$
and therefore $(u,t)=(u_{1},t_{1})$, which proves the uniqueness
assertion. 
\end{proof}

\section{Construction of $n$ paths without triple intersections}

As before, we let $M$ be the compact region bounded by a curve $\sigma\in\Sigma_{r}$,
where $r\ge2$. Suppose $x$ and $y$ are distinct points in ${\rm Int}(M).$
Recall that for $n$ billiard paths from $x$ to $y$ in the billiard
table $M,$ we say that the paths are in general position if the following
four conditions hold: 
\begin{enumerate}
\item [(GP1)]No two paths share a vertex. 
\item [(GP2)]No point occurs as a vertex on the same path more than once.
\item [(GP3)]The paths have no triple intersection points except $x$ and
$y$. 
\item [(GP4)]The points $x$ and $y$ are not interior points of any of
the $n$ paths.
\end{enumerate}
We say that the paths satisfy the ${\it non}$-$collinearity\ condition$
if the following holds: 
\begin{enumerate}
\item [(NC)]For any distinct vertices $P$ and $Q$ of the $n$ paths,
the points $P$, $Q,$ and $y$ are not collinear. 
\end{enumerate}
Note that condition (GP4) implies none of the $n$ paths is perpendicular
to $\sigma$ at any vertex. Moreover, if there exist $n$ paths from
$x$ to $y$ in general position for each positive integer $n$, then
$(x,y)$ is insecure for the billiard table $M.$ Condition (NC) is
not essential for our applications, but it is useful to establish
this condition in addition to conditions (GP1)--(GP4) in our proof
of Theorem \ref{theorem 1}.

We consider the billiard table $M,$ as well as billiard tables obtained
by perturbing the boundary of $M.$ In this section we prove the following.
\begin{thm}
\label{theorem 1}Suppose $M,\sigma,r,x,y$ are as above. Let $n$
be a positive integer and let $\epsilon>0.$ Then there exists a curve
$\sigma_{n}\in\Sigma_{r}$ with $d_{r}(\sigma,\sigma_{n})<\epsilon$
that bounds a region $M_{n}$ still containing $x$ and $y$ in its
interior such that for the billiard system on $M_{n}$ there are $n$
billiard paths from $x$ to $y$ that are in general position.
\end{thm}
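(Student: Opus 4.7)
The plan is to proceed by induction on $n$, using the inductive machinery outlined in Section 2. The base case $n=1$ is essentially trivial: after an arbitrarily small perturbation (if needed) to ensure the segment $xy$ is not tangent to the boundary or the reflection structure is generic, we may take the straight segment $\gamma_1 = xy$, or, if obstructed, any single billiard path from $x$ to $y$, which trivially satisfies (GP1)--(GP4). For the inductive step, assume we have produced a curve $\sigma_n' \in \Sigma_r$ with $d_r(\sigma,\sigma_n') < \epsilon/2$ (say) and $n$ billiard paths $\gamma_1,\dots,\gamma_n$ from $x$ to $y$ in the table $M(\sigma_n')$ that are in general position. Our goal is a further perturbation $\sigma_{n+1}$ with $d_r(\sigma,\sigma_{n+1})<\epsilon$ and $n+1$ paths in general position.

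The first step in the inductive stage is to impose the non-collinearity condition (NC) on the existing paths. By invoking Corollaries \ref{perturb initial segment} and \ref{parallel perturbation} applied at each vertex, I would make small $C^r$ perturbations of $\sigma_n'$ that slightly displace the vertices of $\gamma_1,\ldots,\gamma_n$, preserving general position (this is where the openness built into Corollary \ref{cor: Neigborhood filling} is used: nonconjugacy lets us transport each path under perturbation). Since non-collinearity with $y$ is an open dense condition on finite collections of vertices, this yields perturbed paths $\widetilde{\gamma}_1,\dots,\widetilde{\gamma}_n$ in general position which also satisfy (NC). The key consequence, already noted in the outline, is that no periodic billiard orbit through $x$ and $y$ can have its vertex set contained in $\bigcup_i V(\widetilde{\gamma}_i)$.

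Next I would apply Lemma \ref{maximal length} with $k$ chosen very large compared to the total number $N$ of vertices of $\widetilde{\gamma}_1,\dots,\widetilde{\gamma}_n$. This yields a maximal-length polygonal path $L_k$ from $x$ to $y$ with $k$ boundary vertices, which is automatically a billiard path. Call this $\gamma_{n+1}$. Using the no-periodic-orbit consequence of (NC) together with the pigeonhole-style Lemma \ref{lem:New Vertex}, $\gamma_{n+1}$ must have a vertex $V$ not belonging to $\bigcup_i V(\widetilde{\gamma}_i)$, for otherwise the vertices of $\gamma_{n+1}$ lie in a finite set and a bounded-length closed subpath would give rise to a periodic orbit through $x$ and $y$. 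A further small perturbation from Lemma \ref{only once} ensures that $\gamma_{n+1}$ visits $V$ exactly once.

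Finally, Proposition \ref{perturb vertex and tangent} lets me deform $\sigma_{n}'$ in a small neighborhood of $V$ (and adjust the initial/final angles of $\gamma_{n+1}$ at $x$ and $y$) so that $\gamma_{n+1}$ persists as a billiard path, while none of its vertices coincide with the vertices of $\widetilde{\gamma}_1,\dots,\widetilde{\gamma}_n$ --- these other paths are undisturbed because the perturbation is localized away from their vertices. One last application of Corollaries \ref{perturb initial segment} and \ref{parallel perturbation} near the vertices of $\widetilde{\gamma}_{n+1}$ removes the remaining non-generic coincidences to secure (GP1)--(GP4) for the full collection of $n+1$ paths. By controlling all these successive perturbations to be small in $C^r$ (say each within $\epsilon/2^{n+2}$), we stay within the $\epsilon$-ball around $\sigma$ and preserve $x,y \in \mathrm{Int}(M_{n+1})$.

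The main obstacle, as flagged in Section 2, is producing the genuinely new vertex $V$: a priori, the maximal-length polygonal path from Lemma \ref{maximal length} could fold onto a periodic orbit whose vertex set lies in $\bigcup_i V(\widetilde{\gamma}_i)$, defeating the induction. The non-collinearity condition (NC), enforced \emph{before} invoking Lemma \ref{maximal length}, is exactly what rules this out, and the compatibility of (NC) with subsequent perturbations (so it is preserved at each inductive stage) is the delicate technical point one must track. Once $V$ exists and is unique on $\gamma_{n+1}$, the localized boundary perturbation in Proposition \ref{perturb vertex and tangent} is a relatively clean tool, and the clean-up with Corollaries \ref{perturb initial segment} and \ref{parallel perturbation} is routine.
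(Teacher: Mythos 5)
Your proposal is correct and follows essentially the same route as the paper: induction on $n$, with (NC) used to guarantee (via Lemma \ref{maximal length} and the pigeonhole argument of Lemma \ref{lem:New Vertex}) that a long maximal-length path acquires a genuinely new vertex $V$, then Lemma \ref{only once}, the localized deformation of Proposition \ref{perturb vertex and tangent} near $V$ (with the nonconjugacy/Corollary \ref{cor: Neigborhood filling} machinery to re-aim the initial and final segments), and a final clean-up with Corollaries \ref{perturb initial segment} and \ref{parallel perturbation}. The only differences are organizational (the paper carries (NC) in the induction hypothesis and restores it at the end of each step, and it handles (GP2) for repeated vertices other than $V$ by iterating Lemma \ref{only once} rather than by the parallel-perturbation corollaries), neither of which affects the validity of your argument.
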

We proceed toward a proof of Theorem \ref{theorem 1}, as outlined
in Section 2. We first observe that the property of paths being in
general position is preserved under small perturbations of the paths.

Let $x$ and $y$ be distinct points in $\mathbb{R}^{2}.$ If $\gamma=P_{0}P_{1}\cdots P_{m+1}$
is a polygonal path from $x$ to $y,$ i.e., $P_{0}=x$ and $P_{m+1}=y,$
then we refer to $P_{1},\dots,P_{m}$ as the vertices of $\gamma.$
(In our application $\gamma$ will be a billiard path, and $P_{1},\dots,P_{m}$
will also be vertices in the sense of billiard paths, i.e., they will
lie on the boundary of the billiard table.) As for billiard paths,
we say that polygonal paths $\gamma_{1},\dots,\gamma_{n}$ from $x$
to $y$ are in general position if conditions (GP1)--(GP4) hold, and
we say they satisfy the non-collinearity condition if (NC) holds.
The following lemma is easy to verify, and we omit the proof. 
\begin{lem}
\label{lem:preservation of general position}Suppose $\gamma_{1},\dots,\gamma_{n}$
are polygonal paths from $x$ to $y$ that are in general position.
Then there exists $\epsilon>0$ such that for any polygonal paths
$\widehat{\gamma}_{1},\dots,\widehat{\gamma}_{n}$ from $\widehat{x}$
to $\widehat{y}$ such that $\widehat{\gamma}_{i}$ has the same number
of vertices as $\gamma_{i}$ and $d(\gamma_{i},\widehat{\gamma}_{i})<\epsilon$
for $i=1,\dots,n,$ it follows that $\widehat{\gamma}_{1},\dots,\widehat{\gamma}_{n}$
are in general position. This result remains true if we replace ``are
in general position'' by ``satisfy the non-collinearity condition''
in both the hypothesis and the conclusion. 
\end{lem}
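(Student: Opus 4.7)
The plan is to verify that each of the conditions (GP1)--(GP4) and (NC) is an open condition on the finite-dimensional space parameterizing $n$-tuples of polygonal paths from $\widehat{x}$ to $\widehat{y}$ with prescribed vertex counts. Since only finitely many conditions need to be checked, and each has a strictly positive ``safety margin'' at the reference configuration, one may take $\epsilon$ smaller than the minimum of these margins. Concretely, the relevant quantities are: (i) the minimum of the pairwise distances among the finite sets of vertices forced to be distinct by (GP1) and (GP2); (ii) the minimum distance from $x$ (resp.\ $y$) to any segment of a $\gamma_i$ on which $x$ (resp.\ $y$) is not an endpoint, which controls (GP4); and (iii) the minimum, over distinct vertex pairs $(P,Q)$ from $\gamma_1,\dots,\gamma_n$, of the distance from $y$ to the line through $P$ and $Q$, which controls (NC). Each of these is strictly positive by hypothesis and depends continuously on the configuration, so remains positive on a sufficiently small open neighborhood in configuration space.

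The main subtlety lies in (GP3), the absence of triple intersection points outside $\{x,y\}$. My plan is to enumerate the finitely many unordered triples of distinct segments $\{s_1,s_2,s_3\}$ drawn from $\gamma_1,\dots,\gamma_n$ and split each triple into two cases. If in the reference configuration the three segments share a common point at $x$ or $y$, then (GP4) forces each of $s_1,s_2,s_3$ to have that point as an endpoint, so they are all initial (resp.\ final) segments, and under perturbation the corresponding $\widehat{s}_i$ still meet at $\widehat{x}$ (resp.\ $\widehat{y}$), which is permitted. If instead $s_1\cap s_2\cap s_3=\emptyset$, then by compactness of the segments one has either that at least one pair is disjoint with a positive gap, or every pairwise intersection point lies at strictly positive distance from the third segment; either alternative is manifestly open under small perturbations of the endpoints.

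Intersecting the thresholds from (GP1), (GP2), (GP4), and (NC) with those arising from each triple checked for (GP3) produces the required $\epsilon$. The same enumeration works verbatim for the non-collinearity version of the lemma, by including the (NC) threshold above in the list of conditions being verified. The step I expect to require the most care in a written proof is the subcase analysis for (GP3), because the reference configuration may exhibit geometric degeneracies (for example, two collinear initial segments at $x$, whose intersection is a whole subsegment rather than a point); however, the hypothesis (GP3) itself rules out the problematic manifestations of such degeneracies, so in each case one really does recover a strictly positive margin that survives small perturbations.
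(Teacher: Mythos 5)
The paper omits the proof of this lemma entirely, stating only that it is ``easy to verify,'' so your write-up supplies exactly the routine finite-enumeration/openness verification the authors evidently had in mind, and it is correct. In particular, your treatment of (GP3) --- splitting each triple of segments according to whether its common point is $x$ or $y$ (where the perturbed segments still meet only at $\widehat{x}$ or $\widehat{y}$) and otherwise extracting a positive compactness gap that survives small perturbations, even when two segments overlap in a subsegment --- correctly addresses the only genuinely delicate case.
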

In the next lemma, we show how the non-collinearity condition is used
in the proof of Theorem \ref{theorem 1}. 
\begin{lem}
\label{lem:New Vertex} Let $M,\sigma,r,x,y$ be as above. Suppose
that $\gamma_{1},\dots,\gamma_{n}$ are billiard paths from $x$ to
$y$ in $M$ that satisfy the non-collinearity condition (NC). Then
there exists a billiard path $\gamma$ from $x$ to $y$ in $M$ which
has at least one vertex $V$ that is not a vertex of any of $\gamma_{1},\dots,\gamma_{n}.$\end{lem}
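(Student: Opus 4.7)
The plan is to take $\gamma$ to be the maximal-length polygonal path $L_k = x P_1 P_2 \cdots P_k y$ provided by Lemma \ref{maximal length} for $k$ sufficiently large; by that lemma, $L_k$ is automatically a billiard path in $M$. Let $\mathcal{V}$ denote the (finite) set of all vertices appearing in $\gamma_1,\dots,\gamma_n$ and put $N=|\mathcal{V}|$. I choose $k > N^2+1$ and will show that at least one vertex $P_j$ of $L_k$ must lie outside $\mathcal{V}$, so that $\gamma := L_k$ satisfies the conclusion.

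Suppose for contradiction that $P_1,\dots,P_k\in\mathcal{V}$. Among the $k-1>N^2$ consecutive pairs $(P_i,P_{i+1})$, the pigeonhole principle produces indices $1\le \alpha<\alpha'\le k-1$ with $(P_\alpha,P_{\alpha+1})=(P_{\alpha'},P_{\alpha'+1})$. Because a chord of a billiard trajectory together with the reflection law at its endpoint determines the next chord uniquely (which uses that $\sigma$ has strictly positive curvature, so reflection is always well-defined), an induction gives $P_{\alpha+j}=P_{\alpha'+j}$ for every $j\ge 0$ along the billiard trajectory, possibly continued past $P_k$. Writing $p=\alpha'-\alpha$, this says the (extended) billiard trajectory is periodic with period $p$ from index $\alpha$ onward.

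Now continue the billiard path $L_k$ one step past $P_k$: the outgoing direction after reflection at $P_k$ is, by definition of billiard path, the direction of the final segment $\overline{P_k y}$, and since $y$ lies in the interior of $M$ this ray exits $M$ at a uniquely determined point $P^{\ast}\in\partial M$ with $y$ strictly between $P_k$ and $P^{\ast}$. By the periodicity above, $P^{\ast}=P_{k+1-p}$. Since $1\le k+1-p\le k$, the point $P_{k+1-p}$ is one of the vertices of $L_k$ and therefore lies in $\mathcal{V}$. Moreover, $P_k$ and $P^{\ast}$ are consecutive vertices along the extended billiard trajectory, so they are distinct (as recorded in the paragraph preceding Lemma \ref{maximal length}). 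Thus $P_k$ and $P_{k+1-p}$ are two distinct points of $\mathcal{V}$ collinear with $y$, contradicting hypothesis (NC). Hence some vertex of $L_k$ must lie outside $\mathcal{V}$.

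The main obstacle is isolating the correct geometric statement that converts the pigeonhole repetition into a collinearity of the form forbidden by (NC): one has to extend the billiard trajectory one reflection beyond $P_k$ and recognize, via periodicity, that the new boundary point $P^{\ast}$ is an already-existing vertex of $L_k$. Once this identification is made, the conclusion is immediate, and the argument uses only Lemma \ref{maximal length}, the determinism of billiard reflections, and the hypothesis (NC); no perturbation of $\sigma$ is required.
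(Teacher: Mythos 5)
Your proof is correct and follows essentially the same route as the paper's: take a maximal-length path with more vertices than the square of the number of existing vertices, apply the pigeonhole principle to find a repeated consecutive pair, deduce periodicity of the extended trajectory, and derive a contradiction with (NC). The only difference is that you spell out explicitly how (NC) is violated (by identifying the boundary point $P^{*}$ beyond $y$ with the earlier vertex $P_{k+1-p}$), a step the paper compresses into the single sentence that the path would be contained in a periodic billiard path through $x$ and $y$.
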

\begin{proof}
Let $\mathcal{P}=\left\{ P_{1},P_{2},\ldots P_{k}\right\} $ be the
vertices of $\gamma_{1},\dots,\gamma_{n},$ and let $N$ be an integer
greater than $k^{2}-k+1$. By Lemma \ref{maximal length}, there exists
a billiard path $\mbox{\ensuremath{\gamma}}$ from $x$ to $y$ of
the form $xQ_{1}Q_{2}\cdots Q_{N}y$ with vertices $Q_{1},Q_{2},\dots,Q_{N}.$
If each $Q_{i},$ $i=1,2,\dots,N,$ were in $\mathcal{P}$ then at
least two of the ordered pairs $(Q_{j},Q_{j+1}),$ $j\in\{1,2,\dots N-1\}$
would be the same. But this would imply that the path is contained
in a periodic billiard path through $x$ and $y,$ which is impossible
due to condition (NC). Therefore the set $\{Q_{1},Q_{2},\dots,Q_{N}\}$
must include at least one point not in $\mathcal{P}.$ 
\end{proof}
Proposition \ref{change curvature} and Corollary \ref{nonconjugacy }
below allow us to avoid unwanted conjugacies between any given pairs
of points $(p_{i},q_{i})$ along billiard paths $\gamma_{i},$ for
$i=1,\dots,n,$ by making a small $C^{r}$ change to the table, while
$\gamma_{1},\dots,\gamma_{n}$ remain billiard paths for the new table. 
\begin{prop}
\label{change curvature} Suppose $r\ge2,$ $P$ is a point on a curve
$\sigma\in\Sigma_{r},$ and $\sigma$ has curvature $\kappa$ at $P.$
Given $\epsilon>0,$ there exists $\delta>0$ such that for any $\widetilde{\kappa}\in(\kappa-\delta,\kappa+\delta)$
there exists a curve $\widetilde{\sigma}\in\Sigma_{r}$ such that
$\widetilde{\sigma}$ passes through $P$, $\widetilde{\sigma}$ is
tangent to $\sigma$ at $P$, the curvature of $\widetilde{\sigma}$
at $P$ is $\widetilde{\kappa}$, $\widetilde{\sigma}$ agrees with
$\sigma$ outside an $\epsilon$-neighborhood of $P$, and $d_{r}(\widetilde{\sigma},\sigma)<\epsilon$.\end{prop}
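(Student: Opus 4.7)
The plan is to reduce the global statement to a purely local modification of the curve in graph coordinates. First, I choose an orthonormal coordinate system near $P$ with $P$ at the origin and the tangent line to $\sigma$ at $P$ as the $x$-axis, oriented so that the inward normal of $\sigma$ at $P$ is $(0,1)$. Since $\sigma \in \Sigma_r$, there is a neighborhood of $P$ in which $\sigma$ is the graph $y = f(x)$ of a $C^r$ function with $f(0)=0$, $f'(0)=0$, and $f''(0) = \kappa$. It then suffices to construct a $C^r$ function $\widetilde{f}$ that agrees with $f$ outside a small interval about $0$, satisfies $\widetilde{f}(0)=0$, $\widetilde{f}'(0)=0$, $\widetilde{f}''(0) = \widetilde{\kappa}$, is strictly convex on its support, and has $\|\widetilde{f}-f\|_{C^r}$ small.

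Fix once and for all a $C^\infty$ bump function $\phi\colon\mathbb{R}\to[0,1]$ with $\phi\equiv 1$ on $[-\tfrac{1}{2},\tfrac{1}{2}]$ and $\phi\equiv 0$ outside $[-1,1]$. For scales $\rho>0$ and $c:=(\widetilde{\kappa}-\kappa)/2$, set
\[
\widetilde{f}(x) := f(x) + h(x), \qquad h(x) := c\,x^{2}\,\phi(x/\rho).
\]
Then $h$ is supported in $[-\rho,\rho]$, and direct computation yields $h(0)=h'(0)=0$ and $h''(0)=2c$, so $\widetilde{f}''(0)=\widetilde{\kappa}$. Applying Leibniz's rule termwise, each derivative is bounded by
\[
|h^{(k)}(x)| \leq C_k\,|c|\,\rho^{2-k}, \qquad 0 \leq k \leq r,
\]
on $[-\rho,\rho]$, where the constants $C_k$ depend only on $\phi$.

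I then take $\rho := \epsilon/2$ so that the support of $h$ lies inside an $\epsilon$-neighborhood of $P$, and choose $\delta$ small enough that, whenever $|\widetilde{\kappa}-\kappa|<\delta$: (i) $\widetilde{\kappa}>0$; (ii) the pointwise bound $|h''|\le C_2|c|$ is less than $\tfrac{1}{2}\inf_{|x|\le\rho}f''(x)$, so that $\widetilde{f}''>0$ throughout the patch and the patched curve has positive curvature everywhere; and (iii) $C_k|c|\rho^{2-k}<\epsilon$ for every $k\in\{0,1,\dots,r\}$, the binding constraint being $k=r$, which forces $\delta$ to be of order $\epsilon^{r-1}$. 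Gluing $\widetilde{f}$ into the parametrization of $\sigma$ then produces a simple closed $C^r$ curve $\widetilde{\sigma}\in\Sigma_r$ with the required tangency, curvature, and agreement properties.

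The main bookkeeping task, which is where care is needed, is translating the estimate $\|h\|_{C^r}<\epsilon$ into the curve distance $d_r(\widetilde{\sigma},\sigma)$ as defined via constant-speed parametrizations on $S^1$. Because $\widetilde{\sigma}$ coincides with $\sigma$ outside a compact patch, their total arc-lengths differ by $O(\|h\|_{C^1})$, and a change-of-variables argument shows that the $C^r$ distance between the constant-speed parametrizations is majorized by a constant multiple of $\|h\|_{C^r}$. Any additional loss is absorbed by shrinking $\delta$ further, which is the only genuinely quantitative point in the argument.
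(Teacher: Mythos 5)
Your proof is correct and follows essentially the same route as the paper's: both write $\sigma$ locally as a graph $y=f(x)$ over its tangent line at $P$ and add a compactly supported bump with vanishing value and first derivative but nonzero second derivative at the origin, scaled so that the $C^{r}$ perturbation is small; your $h(x)=c\,x^{2}\phi(x/\rho)$ is a particular instance of the paper's $c\Psi(x)$. The only small point to tidy is that $\rho$ should be taken as the minimum of $\epsilon/2$ and the radius of the neighborhood on which $\sigma$ is a graph (chosen so that the whole graph patch lies in the $\epsilon$-ball about $P$), which is exactly what the paper's choice of the box $B$ accomplishes.
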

\begin{proof}
Choose $\nu>0$ and a Cartesian coordinate system centered at $P$
such that the box $B=\{(x,y):|x|\leq\nu,|y|\leq\nu\}$ is contained
in the $\epsilon$-neighborhood of $P$, and within this box $\sigma$
is given by the graph of a function $f:[-\nu,\nu]\rightarrow[-\nu/2,\nu/2]$
with $f(0)=f'(0)=0$ and $f''(0)>0$. 

Let $\Psi:\mathbb{R}\rightarrow\mathbb{R}$ be a $C^{\infty}$ function
such that $\Psi(0)=\Psi'(0)=0$, $\Psi''(0)\neq0$, and $\Psi(x)=0$
for $x\notin(-\nu,\nu)$. For $|c|$ sufficiently small, the function
$g(x)=f(x)+c\Psi(x)$ maps $[-\nu,\nu]$ to $[-\nu,\nu]$. Let $\widetilde{\sigma}$
be the curve obtained by replacing the graph of $f$ by the graph
of $g$ and keeping $\widetilde{\sigma}=\sigma$ outside $B$. Then
$\widetilde{\sigma}$ passes through $P$, $\widetilde{\sigma}$ is
tangent to $\sigma$ at $P$, and the curvature of $\widetilde{\sigma}$
at $P$ is $\kappa+c\Psi''(0)$. There exists $\eta>0$ such that
for $|c|<\eta,$ we have $\widetilde{\sigma}\in\Sigma_{r}$ and $d_{r}(\widetilde{\sigma},\sigma)<\epsilon$.
Then the result holds with $\delta=\eta|\Psi''(0)|.$ \end{proof}
\begin{cor}
\label{nonconjugacy } Let $M,\sigma,r$ be as above. Suppose the
polygonal path $\gamma=V_{0}V_{1}\cdots V_{m}V_{m+1}$ is a billiard
path for $M,$ where $p=V_{0}\in M,$ $q=V_{m+1}\in M,$ and $V_{1},\dots,V_{m}\in\partial M.$
(The points $p,q$ are allowed to be on $\partial M.)$ Given $\epsilon>0,$
there exists $\widetilde{\sigma}\in\Sigma_{r}$ bounding a table $\widetilde{M}$
such that $d_{r}(\sigma,\widetilde{\sigma})<\epsilon$ and the following
conditions are satisfied:
\begin{enumerate}
\item $\gamma$ is still a billiard path in $\widetilde{M}$; 
\item $\sigma$ and $\widetilde{\sigma}$ agree outside the $\epsilon$-neighborhoods
of $V_{1},\dots,V_{m};$
\item $p$ is not conjugate to $q$ along $\gamma$ in $\widetilde{M}.$
\end{enumerate}

Moreover, for any sufficiently small $C^{2}$ perturbation $\widetilde{\widetilde{\sigma}}$
of $\widetilde{\sigma}$ that agrees with $\widetilde{\sigma}$ to
first order at each of $V_{1},\dots,V_{m},$ $p$ is not conjugate
to $q$ along $\gamma$ in the table $\widetilde{\widetilde{M}}$
bounded by $\widetilde{\widetilde{\sigma}}$. 

\end{cor}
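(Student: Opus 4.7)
The plan is to use Proposition \ref{change curvature} at each vertex $V_i$, over disjoint $\epsilon$-neighborhoods of $V_1,\ldots,V_m$, to adjust the curvature $\kappa_i$ of $\sigma$ at $V_i$ to a nearby value $\widetilde{\kappa}_i$, producing a curve $\widetilde{\sigma}\in\Sigma_r$ with $d_r(\sigma,\widetilde\sigma)<\epsilon$ and $\widetilde\sigma=\sigma$ outside the union of these neighborhoods. Since $\widetilde\sigma$ passes through each $V_i$ and is tangent to $\sigma$ there, the angles of incidence and reflection of $\gamma$ at every $V_i$ are unchanged, so $\gamma$ remains a billiard path in $\widetilde M$; this takes care of conditions (1) and (2). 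Only condition (3) requires choosing the $\widetilde\kappa_i$ carefully.

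For nonconjugacy, I track focusing points along the pencil $\ell_0(u)$ of oriented lines through $p$ parametrized by direction angle, with $\ell_0(u_0)$ equal to the initial segment of $\gamma$, and its successive reflections $\ell_1(u),\ldots,\ell_m(u)$ from $\widetilde\sigma$. Write $d_i=|V_{i-1}V_i|$, let $\alpha_i$ be the (unchanged) angle of incidence of $\gamma$ at $V_i$, and let $g_i$ denote the signed distance from $V_i$ to the focusing point of $\ell_i$ at $u_0$ along the direction of travel. Then $g_0=0$ (since $\ell_0$ focuses at $p=V_0$), and the mirror equation \eqref{eq:Mirror} yields the recursion
\begin{equation}
\frac{1}{g_i}=\frac{1}{g_{i-1}-d_i}+\frac{2\widetilde{\kappa}_i}{\sin\alpha_i},\qquad i=1,\dots,m, \label{eq:plan-recursion}
\end{equation}
with the conventions $1/\infty=0$ and $g_i:=0$ whenever $g_{i-1}=d_i$. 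Every non-degenerate $C^1$ family focusing at $p$ whose $u_0$-line is $\ell_0$ has incoming focal distance $-d_1$ at $V_1$, so the same recursion applies; hence $p$ is conjugate to $q$ along $\gamma$ in $\widetilde M$ if and only if $g_m=d_{m+1}$.

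Keeping $\widetilde\kappa_i=\kappa_i$ for $i<m$, it therefore suffices to choose $\widetilde\kappa_m$ so that $g_m\neq d_{m+1}$. If the unperturbed $g_m$ is already $\neq d_{m+1}$, take $\widetilde\sigma=\sigma$. Otherwise $g_m=d_{m+1}>0$, so $g_m\neq 0$ and $g_{m-1}-d_m\neq 0$; differentiating the $i=m$ equation of \eqref{eq:plan-recursion} with respect to $\widetilde\kappa_m$ with $g_{m-1}$ fixed gives $\partial g_m/\partial\widetilde\kappa_m=-2g_m^2/\sin\alpha_m\neq 0$, so an arbitrarily small change in $\widetilde\kappa_m$ moves $g_m$ off $d_{m+1}$. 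By Proposition \ref{change curvature}, such a change is realized by a curve $\widetilde\sigma\in\Sigma_r$ with $d_r(\sigma,\widetilde\sigma)<\epsilon$ that agrees with $\sigma$ off an $\epsilon$-neighborhood of $V_m$.

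For the ``moreover'' assertion, the right-hand side of \eqref{eq:plan-recursion} depends only on the positions of the $V_i$, the tangents of the boundary curve at $V_i$ (through $d_i$ and $\alpha_i$), and the curvatures at the $V_i$. If $\widetilde{\widetilde\sigma}$ agrees with $\widetilde\sigma$ to first order at each $V_i$, then positions and tangents at the $V_i$ coincide, so $d_i$ and $\alpha_i$ are preserved and $\gamma$ is still a billiard path in $\widetilde{\widetilde M}$, while $C^2$-closeness of $\widetilde{\widetilde\sigma}$ to $\widetilde\sigma$ forces the curvatures at each $V_i$ to be close to $\widetilde\kappa_i$. Thus $g_m$ depends continuously on these data and the strict inequality $g_m\neq d_{m+1}$ persists. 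The one subtle point is the possibly degenerate case $g_{m-1}-d_m=0$ in the last step of \eqref{eq:plan-recursion}, but by our conventions this forces $g_m=0$, which is automatically different from $d_{m+1}>0$ and needs no perturbation; this is the main obstacle to avoid in writing the argument cleanly.
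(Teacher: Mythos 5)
Your reduction of conjugacy to the focusing recursion, your treatment of conditions (1) and (2) via Proposition \ref{change curvature}, and your continuity argument for the ``moreover'' clause all match the paper's proof. But there is a genuine gap in the main step: you assume you can set $\widetilde{\kappa}_m$ independently of $\widetilde{\kappa}_1,\dots,\widetilde{\kappa}_{m-1}$, i.e., that the reflection points $V_1,\dots,V_m$ are pairwise distinct points of $\partial M$ (your phrase ``disjoint $\epsilon$-neighborhoods of $V_1,\dots,V_m$'' already presupposes this). The corollary makes no such assumption, and it is applied in the paper precisely to paths that may visit the same boundary point several times --- for instance, in Lemma \ref{only once} one needs ``$V$ is not conjugate to itself along any path contained in $\gamma_{n+1}$'' \emph{before} condition (GP2) has been arranged. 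If $V_m$ coincides with some earlier $V_j$ as a point of $\sigma$, then Proposition \ref{change curvature} applied at that point changes the curvature entering the recursion at step $j$ as well as at step $m$, so you cannot ``keep $\widetilde{\kappa}_i=\kappa_i$ for $i<m$''; and once several of the $\widetilde{\kappa}_i$ are forced to move together, your one-variable derivative computation no longer shows that $g_m$ actually moves --- a priori the contributions from the several passages through the same boundary point could cancel.

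The paper closes exactly this gap (see the Remark following the corollary) by scaling \emph{all} the curvatures by a common factor $z$, which is a well-defined perturbation even with repeated vertices, and then observing from the recursion (\ref{eq:FocusingRecursion}) that $f_m(z)$ is a linear fractional transformation of $z$ that is not identically the conjugacy value, because $f_m(0)=-\rho_0-\rho_1-\cdots-\rho_m\neq 0$; hence suitable $z$ arbitrarily close to $1$ work. Your argument is correct, and somewhat simpler, in the special case where $V_m$ occurs only once among $V_1,\dots,V_m$ (the paper's Remark notes this), but to prove the corollary as stated you must either handle repeated vertices by an argument of the paper's type or add a distinctness hypothesis --- which would then be unavailable exactly where the corollary is needed.
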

\begin{proof}
Assume that $p$ and $q$ are conjugate along $\gamma.$ (Otherwise,
we may take $\widetilde{\sigma}=\sigma.)$ Let $\kappa_{i}$ be the
curvature of $\sigma$ at $V_{i}$ for $i=1,\dots,m,$ and let $\rho_{i}={\rm dist}(V_{i},V_{i+1})$
for $i=0,\dots,m.$ Let $\ell_{0},\dots,\ell_{m}$ be the oriented
lines along the segments of $\gamma,$ that is, $\ell_{i}=V_{i}V_{i+1},$
for $i=0,\dots,m.$ Parametrize $\ell_{i}$ by $V_{i+1}+tv_{i},$
where $v_{i}$ is the unit vector in the direction of $\ell_{i}.$
Let $\ell_{0}(u),$ with $u\in I=[-\delta,\delta]$ for some $\delta>0,$
be the family of lines through $p$ with $\ell_{0}(0)=\ell_{0},$
and $\ell_{0}(u)$ making signed angle $u$ with $\ell_{0}.$ Suppose
$\widetilde{\sigma}=\widetilde{\sigma}(z)$ is the boundary of a billiard
table $\widetilde{M}$ such that $\sigma$ and $\widetilde{\sigma}$
agree to first order at $V_{1},\dots,V_{m},$ and $\widetilde{\sigma}(z)$
has curvature $\kappa_{i}z$ at $V_{i},$ for $i=1,\dots,m.$ Let
$\widetilde{\ell}_{0}(u)=\ell_{0}(u),$ and for $i=1,\dots,m,$ let
$\widetilde{\ell}_{i+1}(u)$ be the family of lines obtained by reflecting
$\widetilde{\ell}_{i}(u)$ from $\partial\widetilde{M}.$ Let $f_{i}=f_{i}(z)\in\mathbb{R}\cup\{\infty\}$
be such that $V_{i+1}+f_{i}v_{i}$ is the focusing point for $\widetilde{\ell}_{i}(u),$
$u\in I,$ at $u=0.$ Then $f_{0}=-\rho_{0},$ and for $i=0,\dots,m-1,$
it follows from the mirror equation (\ref{eq:Mirror}) that
\end{proof}

\subjclass[2000]{
\begin{equation}
f_{i+1}=\frac{1}{\frac{1}{f_{i}}+\frac{2\kappa_{i}z}{\sin\alpha_{i}}}-\rho_{i+1},\label{eq:FocusingRecursion}
\end{equation}
where $\alpha_{i}$ is the angle that $\gamma$ makes with the tangent
line to $\sigma$ at the $(i+1)$st reflection. The term $-\rho_{i+1}$
occurs in equation (\ref{eq:FocusingRecursion}) because the origin
(time $t=0)$ of $\ell_{i+1}$ is taken at $V_{i+2}$ while the origin
of $\ell_{i}$ is at $V_{i+1}.$ By repeated application of (\ref{eq:FocusingRecursion})
we see that $f_{m}=f_{m}(z)$ is a linear fractional transformation
of $z,$ that is, it is of the form $(az+b)/(cz+d)$. Note that $p$
and $q$ are conjugate along $\gamma$ within $\widetilde{M}$ if
and only if $f_{m}(z)$=0. We want to show that we may choose $z$
arbitrarily close to $1$ so that $f_{m}(z)\ne0.$ But if we take
$z=0,$ we obtain $f_{m}(0)=-\rho_{0}-\rho_{1}-\dots-\rho_{m}\ne0.$
Therefore $f_{m}(z)$ is not identically $0.$ Thus there exists $z$
arbitrarily close to $1$ so that $f_{m}(z)\ne0.$ Given $\epsilon>0,$
it follows from Proposition \ref{change curvature} that for $z$
sufficiently close to $1,$ we can find $\widetilde{\sigma}\in\Sigma_{r}$
such that $d_{r}(\sigma,\widetilde{\sigma})<\epsilon,$ $\widetilde{\sigma}$
agrees with $\sigma$ to first order at $V_{1},\dots,V_{m},$ $\sigma$
and $\widetilde{\sigma}$ agree outside the $\epsilon$-neighborhoods
of $V_{1},\dots,V_{m},$ and the curvature of $\widetilde{\sigma}$
at $V_{i}$ is $\kappa_{i}z$ for $i=1,\dots,m.$ Thus, for $z$ close
to 1, but not equal to 1, conditions (1)--(3) are satisfied for $\widetilde{\sigma}.$ }
\begin{proof}
It follows from (\ref{eq:FocusingRecursion}) that non-conjugacy of
$p$ and $q$ along $\gamma$ within $\widetilde{M}$ is preserved
when $\widetilde{\sigma}$ is replaced by a small $C^{2}$ perturbation
$\widetilde{\widetilde{\sigma}}$ that agrees with $\widetilde{\sigma}$
to first order at $V_{1},\dots,V_{m}.$\end{proof}
\begin{rem}
In the proof of Corollary \ref{nonconjugacy }, if one of the vertices
$V_{i_{0}}$ occurs only once on the list $V_{1},\dots,V_{m},$ then
$\widetilde{\sigma}$ could be obtained by changing the curvature
just at $V_{i_{0}}.$ The argument that we used takes care of the
case in which each $V_{i_{0}}$ may occur multiple times, and we have
to make sure that the effect on $f_{m}$ of changing the curvature
at the vertices is not canceled out during multiple passages of $\gamma$
through these vertices. 
\end{rem}
The following proposition provides a perturbation technique that will
be used repeatedly in the proof of Theorem \ref{theorem 1}.
\begin{prop}
\label{perturb vertex and tangent}Let $r\ge2$ and $\sigma\in\Sigma_{r}.$
Suppose $P$ is a point on $\sigma$ and $\ell$ is the tangent line
to $\sigma$ at $P$. Given $\epsilon>0$, there exists $\delta>0$
such that if $\widetilde{P}\in\mathbb{R}^{2}$ is such that $d(P,\widetilde{P})<\delta$
and $\widetilde{\ell}$ is a line through $\widetilde{P}$ that is
either parallel to $\ell$ or makes angle less than $\delta$ with
$\ell$, then there exists $\widetilde{\sigma}\in\Sigma_{r}$ such
that $\widetilde{\sigma}$ agrees with $\sigma$ outside an $\epsilon$-neighborhood
of $P$, $\widetilde{\sigma}$ passes through $\widetilde{P}$ with
tangent line $\widetilde{\ell}$ at $\widetilde{P},$ and $d_{r}(\sigma,\widetilde{\sigma})<\epsilon$.\end{prop}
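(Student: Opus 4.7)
My plan is to reduce the construction of $\widetilde{\sigma}$ to a local $C^r$-small graph perturbation. First I would choose a Cartesian coordinate system centered at $P$ in which $\ell$ is the $x$-axis and the inward normal to $\sigma$ points upward. Because $\sigma\in\Sigma_r$ has positive curvature at $P$, there is a $\nu>0$ with $[-\nu,\nu]\times[-\nu,\nu]$ contained in the $\epsilon$-neighborhood of $P$ on which $\sigma$ is given by the graph of a $C^r$ function $f:[-\nu,\nu]\to\mathbb{R}$ satisfying $f(0)=f'(0)=0$ and $f''\ge c_0>0$. A point $\widetilde{P}$ with $d(P,\widetilde{P})<\delta$ then has coordinates $(a,b)$ with $|a|,|b|<\delta$, and any line $\widetilde{\ell}$ through $\widetilde{P}$ parallel to $\ell$ or making angle less than $\delta$ with $\ell$ has some slope $m$ with $|m|\le\tan\delta$.

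Next I fix once and for all a $C^\infty$ cutoff $\chi:\mathbb{R}\to[0,1]$ with $\chi\equiv 1$ on $[-\nu/2,\nu/2]$ and $\chi\equiv 0$ outside $(-\nu,\nu)$. Imposing $\delta<\nu/2$ ensures $\chi(a)=1$ and $\chi'(a)=0$. I then set
\[
B := m-f'(a),\qquad A := b - f(a) - a\bigl(m-f'(a)\bigr),
\]
and define $\widetilde{f}(x):=f(x)+\chi(x)(A+Bx)$, extending by $\widetilde{\sigma}=\sigma$ outside the coordinate box. A direct computation gives $\widetilde{f}(a)=b$ and $\widetilde{f}'(a)=m$, so the graph of $\widetilde{f}$ passes through $\widetilde{P}$ with tangent line $\widetilde{\ell}$ there, and the extended curve $\widetilde{\sigma}$ agrees with $\sigma$ outside an $\epsilon$-neighborhood of $P$.

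Finally I would verify the quantitative estimates. Since $|a|,|b|,|m|\le\delta$ and $f(0)=f'(0)=0$ with $f',f''$ bounded on $[-\nu,\nu]$, the coefficients satisfy $|A|,|B|=O(\delta)$, whence $\|\chi(x)(A+Bx)\|_{C^r([-\nu,\nu])}\le C_\chi(|A|+|B|)=O(\delta)$. This yields $d_r(\sigma,\widetilde{\sigma})<\epsilon$ once $\delta$ is chosen small enough. For positive curvature, $\widetilde{f}''=f''+\chi''(A+Bx)+2\chi' B$ deviates from $f''$ by $O(\delta)$ on $[-\nu,\nu]$, so $\widetilde{f}''>0$ there for $\delta$ small, and the curvature is unchanged outside the box; a $C^r$-small modification supported in a small region also preserves the simple closed property, so $\widetilde{\sigma}\in\Sigma_r$. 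The main subtlety is coordinating the two interpolation conditions (value and slope at $\widetilde{P}$) with the requirement that the perturbation die off smoothly before leaving the $\epsilon$-neighborhood; I resolve this by insisting that $\chi$ be identically one on a neighborhood of the origin large enough to contain $a$, which decouples the cutoff from the jet data and lets the two scalars $A,B$ match the prescribed point and tangent independently.
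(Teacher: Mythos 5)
Your proposal is correct and follows essentially the same route as the paper: write $\sigma$ locally as a graph over the tangent line at $P$, add a compactly supported $C^{r}$-small correction whose value and derivative at $a$ match the prescribed point and slope, and check that positivity of curvature and the $d_{r}$ bound survive because the correction is $O(\delta)$ in $C^{r}$. The only (cosmetic) difference is the form of the interpolant -- you use a plateau cutoff times an affine function, while the paper uses two translated bump functions $\Psi_{1}(x-a)$, $\Psi_{2}(x-a)$ handling the value and slope separately -- and both verifications go through identically.
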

\begin{proof}
Let $\epsilon>0.$ Choose $\nu>0$ and a Cartesian coordinate system
centered at $P$ such that the box $B=\{(x,y):|x|\leq\nu,|y|\leq\nu\}$
is contained in the $\epsilon$-neighborhood of $P$, and within this
box $\sigma$ is given by the graph of a function $f:[-\nu,\nu]\rightarrow[-\nu/2,\nu/2]$
with $f(0)=f'(0)=0$ and $f''(0)>0$. Suppose that in this coordinate
system $\widetilde{P}=(a,b)$ and $\widetilde{\ell}$ has slope $m$. 

Let $\Psi_{1}:\mathbb{R}\rightarrow\mathbb{R}$ be a $C^{\infty}$
function such that $\Psi_{1}(x)=0$ for all $x\notin(-\nu/2,\nu/2)$,
$\Psi_{1}(0)=1$, and $\Psi_{1}'(0)=0$. Let $\psi_{2}:\mathbb{R}\rightarrow\mathbb{R}$
be a $C^{\infty}$ function such that $\psi_{2}(0)=1$, $\int_{-\nu/2}^{0}\psi_{2}(x)~dx=\int_{0}^{\nu/2}\psi_{2}(x)~dx=0$,
and $\psi_{2}(x)=0$ for $x\notin(-\nu/2,\nu/2)$. Let $\Psi_{2}:\mathbb{R}\rightarrow\mathbb{R}$
be defined by $\Psi_{2}(x)=\int_{-\infty}^{x}\psi_{2}(u)~du$. Then
$\Psi_{2}(0)=0$, $\Psi_{2}'(0)=1$, and $\Psi_{2}(x)=0$ for $x\notin(-\nu/2,\nu/2)$. 

Define a function $\Psi:[-\nu,\nu]\rightarrow\mathbb{R}$ by
\[
\Psi(x)=(b-f(a))\Psi_{1}(x-a)+(m-f'(a))\Psi_{2}(x-a).
\]
If $|a|<\nu/2$, then $\Psi$ vanishes in a neighborhood of the boundary
of $[-\nu,\nu]$. For any $\eta>0$, since $f$ is $C^{1}$ there
exists $\delta=\delta(\eta)>0$ such that $\max(|a|,|b|,|f(a)|,|f'(a)|,|m|)<\eta$
whenever $d(P,\widetilde{P})<\delta$ and $\ell$ and $\widetilde{\ell}$
are either parallel or meet at an angle less than $\delta$. 

For $\eta$ sufficiently small, the function $g(x)=f(x)+\Psi(x)$
maps $[-\nu,\nu]$ to $[-\nu,\nu]$. Let $\widetilde{\sigma}$ be
the curve obtained by replacing the graph of $f$ by the graph of
$g$ and keeping $\widetilde{\sigma}=\sigma$ outside $B$. Since
$g(a)=b$ and $g'(a)=m$, we see that $\widetilde{\sigma}$ passes
through $\widetilde{P}$ and has tangent line $\widetilde{\ell}$
at $\widetilde{P}$. Moreover, if $\eta$ is sufficiently small, then
$\widetilde{\sigma}\in\Sigma_{r}$ and $d_{r}(\sigma,\widetilde{\sigma})<\epsilon.$ \end{proof}
\begin{cor}
\label{perturb initial segment}Consider a billiard path $APA_{1}$
in the table M with boundary $\sigma\in\Sigma_{r}$, for $r\ge2,$
where $A,A_{1}\in{\rm Int}(M)$ and $P\in\partial M.$ For any $\epsilon>0$,
if $\widetilde{P}$ is a point on either of the segments $AP$ or
$PA_{1}$ and is sufficiently close to $P$, there exists $\widetilde{\sigma}\in\Sigma_{r}$
with $d_{r}(\sigma,\widetilde{\sigma})<\epsilon$ such that $A\widetilde{P}A_{1}$
is a billiard path for the table with boundary $\widetilde{\sigma}$.
(See Figure 1.)\end{cor}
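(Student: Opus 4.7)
The plan is to invoke Proposition \ref{perturb vertex and tangent} at $\widetilde{P}$ with a tangent line $\widetilde{\ell}$ chosen so that the billiard reflection law at $\widetilde{P}$ is automatically satisfied. For each candidate $\widetilde{P}$ on segment $AP$ or $PA_{1}$ close to $P$, define $\widetilde{\ell}=\widetilde{\ell}(\widetilde{P})$ to be the unique line through $\widetilde{P}$ such that the segments $A\widetilde{P}$ and $\widetilde{P}A_{1}$ make equal angles with it on opposite sides; equivalently, the inward normal to $\widetilde{\ell}$ at $\widetilde{P}$ is the interior bisector of $\angle A\widetilde{P}A_{1}$, so its direction is proportional to $(A-\widetilde{P})/|A-\widetilde{P}|+(A_{1}-\widetilde{P})/|A_{1}-\widetilde{P}|$. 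Because $APA_{1}$ is already a billiard path in $M$, the tangent line $\ell$ to $\sigma$ at $P$ plays this role at $P$ for the vectors $A-P$ and $A_{1}-P$. Since those unit direction vectors depend continuously on $\widetilde{P}$ and converge to the corresponding unit vectors at $P$ as $\widetilde{P}\to P$, the direction of $\widetilde{\ell}$ depends continuously on $\widetilde{P}$ and tends to the direction of $\ell$.

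Given $\epsilon>0$, apply Proposition \ref{perturb vertex and tangent} to $\sigma$ and $P$ to obtain an associated $\delta_{1}>0$. By the continuity just established, there is $\delta\in(0,\delta_{1})$ such that for every $\widetilde{P}$ on $AP$ or $PA_{1}$ with $d(P,\widetilde{P})<\delta$, the line $\widetilde{\ell}(\widetilde{P})$ makes angle less than $\delta_{1}$ with $\ell$. Proposition \ref{perturb vertex and tangent} then produces $\widetilde{\sigma}\in\Sigma_{r}$ with $d_{r}(\sigma,\widetilde{\sigma})<\epsilon$, passing through $\widetilde{P}$ with tangent $\widetilde{\ell}$, and agreeing with $\sigma$ outside an $\epsilon$-neighborhood of $P$.

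The remaining step, which I expect to be the main obstacle, is to verify that $A\widetilde{P}A_{1}$ is actually a billiard path for $\widetilde{M}=M(\widetilde{\sigma})$. The reflection law at $\widetilde{P}$ holds by construction of $\widetilde{\ell}$. Since $\widetilde{\sigma}$ agrees with $\sigma$ outside a small neighborhood of $P$, the endpoints $A$ and $A_{1}$ still lie in $\mathrm{Int}(\widetilde{M})$, and the portions of the two segments away from that neighborhood remain in $\mathrm{Int}(\widetilde{M})$ by $C^{r}$-closeness. Near $\widetilde{P}$, each segment is transversal to $\widetilde{\ell}$ with angle $\alpha\in(0,\pi)$ inherited by continuity from the angle at $P$, while $\widetilde{\sigma}$ is tangent to $\widetilde{\ell}$ at $\widetilde{P}$ and, by the positive curvature hypothesis, bends away from $\widetilde{\ell}$ into $\mathrm{Int}(\widetilde{M})$. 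In local coordinates in which $\widetilde{\ell}$ is the $x$-axis and the inward normal is the positive $y$-direction, $\widetilde{\sigma}$ has the form $y=(\widetilde{\kappa}/2)x^{2}+O(x^{3})$ with $\widetilde{\kappa}>0$, whereas each open segment leaves $\widetilde{P}$ with positive slope and thus lies strictly above the parabola for small positive arc-length. After possibly shrinking $\delta$ and $\epsilon$, these local observations combine with the global $C^{r}$-closeness to guarantee that the open segments $A\widetilde{P}$ and $\widetilde{P}A_{1}$ lie entirely in $\mathrm{Int}(\widetilde{M})$, completing the construction.
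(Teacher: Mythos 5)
Your proposal is correct and follows essentially the same route as the paper: choose $\widetilde{\ell}$ perpendicular to the bisector of $\varangle A\widetilde{P}A_{1}$, note by continuity that it makes a small angle with the tangent line $\ell$ at $P$ as $\widetilde{P}\to P$, and invoke Proposition \ref{perturb vertex and tangent}. Your additional verification that the open segments $A\widetilde{P}$ and $\widetilde{P}A_{1}$ remain in ${\rm Int}(\widetilde{M})$ is a routine detail the paper leaves implicit.
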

\begin{proof}
Let $\ell$ be the tangent line to $\sigma$ at $P.$ Then $\ell$
is perpendicular to the angle bisector of $\varangle(APA_{1}).$ If
$\widetilde{P}$ on $AP$ or $PA_{1}$ is sufficiently close to $P,$
then the line $\widetilde{\ell}$ that is perpendicular to the angle
bisector of $\varangle(A\widetilde{P}A_{1})$ makes a small angle
with $\ell.$ Then we apply Proposition \ref{perturb vertex and tangent}.\end{proof}
\begin{cor}
\label{parallel perturbation} Let $APQB$ be a billiard path in the
table $M$ with boundary $\sigma\in\Sigma_{r}$, for $r\ge2,$ where
$A,B\in{\rm Int}(M)$ and $P,Q\in\partial M.$ For any $\epsilon>0$,
if $\widetilde{P}$ is a point on the line $AP$ and is sufficiently
close to $P$, then there exists $\widetilde{\sigma}\in\Sigma_{r}$
bounding a table $\widetilde{M}$ such that $d_{r}(\sigma,\widetilde{\sigma})<\epsilon,$
$\widetilde{\sigma}$ agrees with $\sigma$ outside $\epsilon$-neighborhoods
of $P$ and $Q,$ and $A\widetilde{P}\widetilde{Q}B$ is a billiard
path for $\widetilde{M}$, where $\widetilde{Q}$ is the point where
the line through $\widetilde{P}$ parallel to $PQ$ intersects the
line $BQ.$ (See Figure 2.)\end{cor}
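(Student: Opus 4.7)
The plan is to apply Proposition \ref{perturb vertex and tangent} twice, once near $P$ and once near $Q$, using in each case the ``parallel tangent line'' option that Proposition \ref{perturb vertex and tangent} allows. Let $\ell_{P}$ and $\ell_{Q}$ denote the tangent lines to $\sigma$ at $P$ and $Q$, respectively. Because $APQB$ is a billiard path, $\ell_{P}$ is perpendicular to the angle bisector of $\varangle APQ$ and $\ell_{Q}$ is perpendicular to the angle bisector of $\varangle PQB$.

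First I would verify that $\widetilde{Q}$ is well defined whenever $\widetilde{P}$ is close enough to $P$. The line $PQ$ and the line $BQ$ intersect at $Q$ and are distinct (otherwise the ``reflection'' at $Q$ would not turn the trajectory, contradicting that $Q\in\partial M$ is a proper vertex), so they are not parallel. Hence the line through $\widetilde{P}$ parallel to $PQ$ meets the line $BQ$ in exactly one point $\widetilde{Q}$, which depends continuously on $\widetilde{P}$ and satisfies $\widetilde{Q}\to Q$ as $\widetilde{P}\to P$.

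Next I would compute the tangent directions that $\widetilde{\sigma}$ must have at $\widetilde{P}$ and $\widetilde{Q}$ in order that $A\widetilde{P}\widetilde{Q}B$ be a billiard path. Since $\widetilde{P}$ lies on the line $AP$, the ray $\widetilde{P}A$ is parallel to the ray $PA$; by construction $\widetilde{P}\widetilde{Q}$ is parallel to $PQ$. Therefore the angle bisector of $\varangle A\widetilde{P}\widetilde{Q}$ is parallel to the angle bisector of $\varangle APQ$, and the tangent line $\widetilde{\ell}_{P}$ to $\widetilde{\sigma}$ required at $\widetilde{P}$ is exactly the line through $\widetilde{P}$ parallel to $\ell_{P}$. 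Similarly, $\widetilde{Q}B$ is parallel to $QB$ and $\widetilde{Q}\widetilde{P}$ is parallel to $QP$, so the tangent line $\widetilde{\ell}_{Q}$ needed at $\widetilde{Q}$ is the line through $\widetilde{Q}$ parallel to $\ell_{Q}$.

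Finally, I would shrink $\epsilon$ if necessary so that the open $\epsilon$-neighborhoods of $P$ and $Q$ are disjoint (possible since $P\ne Q$). Applying Proposition \ref{perturb vertex and tangent} near $P$ with target point $\widetilde{P}$ and target tangent line $\widetilde{\ell}_{P}$ (using the ``parallel'' option) produces a $C^{r}$ curve $\sigma_{1}$ agreeing with $\sigma$ outside the $\epsilon$-neighborhood of $P$, with $d_{r}(\sigma,\sigma_{1})<\epsilon/2$, passing through $\widetilde{P}$ with tangent $\widetilde{\ell}_{P}$. Applying the proposition again, now to $\sigma_{1}$ near $Q$ with target point $\widetilde{Q}$ and target tangent line $\widetilde{\ell}_{Q}$, produces $\widetilde{\sigma}\in\Sigma_{r}$ that equals $\sigma$ outside the union of the $\epsilon$-neighborhoods of $P$ and $Q$, satisfies $d_{r}(\sigma,\widetilde{\sigma})<\epsilon$, and has the prescribed tangent data at both $\widetilde{P}$ and $\widetilde{Q}$. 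By the reflection law, $A\widetilde{P}\widetilde{Q}B$ is then a billiard path for $\widetilde{M}$. The only obstacle worth checking is the existence of $\widetilde{Q}$ and the bookkeeping that the two small perturbations do not overlap; both are handled by choosing $\widetilde{P}$ close enough to $P$ and $\epsilon$ smaller than $\tfrac{1}{2}\,\mathrm{dist}(P,Q)$.
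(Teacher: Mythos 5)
Your proposal is correct and follows essentially the same route as the paper: determine the tangent lines required at $\widetilde{P}$ and $\widetilde{Q}$ from the reflection law and apply Proposition \ref{perturb vertex and tangent} twice in disjoint neighborhoods of $P$ and $Q$. Your observation that the new angle bisectors are exactly parallel to the old ones (rather than merely close, as the paper states) is a slight sharpening, but both versions are covered by the ``parallel or small angle'' hypothesis of that proposition.
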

\begin{proof}
In order for $A\widetilde{P}\widetilde{Q}B$ to be a billiard path,
the tangent lines to $\tilde{\sigma}$ at $\widetilde{P}$ and $\widetilde{Q}$
need to be perpendicular to the angle bisectors of $A\widetilde{P}\widetilde{Q}$
and $\widetilde{P}\widetilde{Q}B$, respectively. If $\widetilde{P}$
is sufficiently close to $P$, then $\widetilde{Q}$ is sufficiently
close to $Q$ that the angle bisectors of $A\widetilde{P}\widetilde{Q}$
and $\widetilde{P}\widetilde{Q}B$ are close to the angle bisectors
of $APQ$ and $PQB$, respectively. Thus $\widetilde{\sigma}$ can
be obtained by applying Proposition \ref{perturb vertex and tangent}
twice.\end{proof}
\begin{lem}
\label{only once}Let $r\ge2$, $\sigma\in\Sigma_{r},$ and $M=M(\sigma)$
be as above. Let $x$ and $y$ be distinct points in ${\rm Int}(M).$
Suppose $\gamma_{1},\ldots,\gamma_{n+1}$ are billiard paths in $M$
from $x$ to $y$ such that $\gamma_{1},\ldots,\gamma_{n}$ are in
general position, and $\gamma_{n+1}$ has at least one vertex $V$
that is not a vertex of any of $\gamma_{1},\ldots,\gamma_{n}.$ Then
given $\epsilon>0$ there exists $\widetilde{\sigma}\in\Sigma_{r}$
with $d_{r}(\sigma,\widetilde{\sigma})<\epsilon$ such that for the
table $\widetilde{M}$ bounded by $\widetilde{\sigma},$ we have $x,y\in{\rm Int}(\widetilde{M}),$
and there are billiard paths $\widetilde{\gamma}_{1},\ldots,\widetilde{\gamma}_{n},\widetilde{\gamma}_{n+1}$
from $x$ to $y$ satisfying the following conditions:\end{lem}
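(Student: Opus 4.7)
The plan is to perturb $\sigma$ only inside a small open set $U$ that contains $V$ but is disjoint from every other vertex of $\gamma_1,\dots,\gamma_{n+1}$ and from every chord of these paths not incident to $V$. Since $V$ is not a vertex of any of $\gamma_1,\dots,\gamma_n$, such a $U$ with $\operatorname{diam}(U)<\epsilon$ exists. Any $\widetilde{\sigma}\in\Sigma_r$ that agrees with $\sigma$ off $U\cap\partial M$ automatically leaves $\gamma_1,\dots,\gamma_n$ intact as billiard paths, so we set $\widetilde{\gamma}_i=\gamma_i$ for $i\le n$. Let $j\ge 1$ denote the number of times $V$ appears as a vertex of $\gamma_{n+1}$. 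If $j=1$ we may take $\widetilde{\sigma}=\sigma$, so assume $j\ge 2$.

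I would first invoke Corollary~\ref{nonconjugacy } on each subpath of $\gamma_{n+1}$ between consecutive visits to $V$ (and on its initial and final subpaths) to make a preliminary perturbation of $\sigma$ that renders these subpaths nonconjugate. These modifications change curvatures only at vertices of $\gamma_{n+1}$ distinct from $V$ and can be arranged to fix all tangent lines, so they preserve $\gamma_1,\dots,\gamma_n$ as billiard paths. Once nonconjugacy is in place, repeated application of Corollary~\ref{cor: Neigborhood filling} to each subpath implies that for every sufficiently small $C^r$-perturbation $\widetilde{\sigma}$ of $\sigma$ supported in $U\cap\partial M$, there is a unique billiard path $\widetilde{\gamma}_{n+1}$ in the perturbed table from $x$ to $y$ close to $\gamma_{n+1}$. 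Label its $j$ vertices lying in $U$ by $v_1(\widetilde{\sigma}),\dots,v_j(\widetilde{\sigma})$ in order along the path, with $v_k=V$ when $\widetilde{\sigma}=\sigma$.

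The core step, and the main obstacle, is to select $\widetilde{\sigma}$ so that $v_1,\dots,v_j$ are pairwise distinct. I would consider a multiparameter family $\widetilde{\sigma}_\lambda$ of perturbations supported in $U$ coming from Propositions~\ref{perturb vertex and tangent} and~\ref{change curvature} (varying the contact point, tangent direction, and curvature at the perturbed vertex), so each $v_k$ becomes a $C^1$ function of $\lambda$ with $v_k(0)=V$. The first visit $v_1(\lambda)$ is determined by intersecting the fixed incoming chord of $\gamma_{n+1}$ from $V_{i_1-1}$ (or from $x$) with $\widetilde{\sigma}_\lambda$, so it depends only on the pointwise location of the perturbed curve near that chord. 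For $k\ge 2$, on the other hand, $v_k(\lambda)$ also depends, via the reflection law~(\ref{eq:reflected vector}), on the tangent and curvature data of $\widetilde{\sigma}_\lambda$ at $v_1(\lambda),\dots,v_{k-1}(\lambda)$; the focusing recursion~(\ref{eq:FocusingRecursion}) shows that this extra dependence is genuinely distinct from the dependence governing $v_1$. A transversality argument, exploiting that a positional parameter $\lambda_1$ shifts $v_1$ at a rate $\lambda_1/\sin\alpha_{i_1}$ while each later $v_k$ receives this plus strictly distinct reflection-induced corrections, yields that the analytic subset $\{\lambda:v_k(\lambda)=v_{k'}(\lambda)\text{ for some }k\ne k'\}$ is a proper subset of every neighborhood of $0$ in parameter space.

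Choosing $\lambda$ in its complement with $d_r(\sigma,\widetilde{\sigma}_\lambda)<\epsilon$ produces the desired $\widetilde{\sigma}$: the resulting $\widetilde{\gamma}_{n+1}$ has $j$ distinct vertices close to $V$, so each of them is a vertex that $\widetilde{\gamma}_{n+1}$ passes through exactly once, and by Lemma~\ref{lem:preservation of general position} the paths $\widetilde{\gamma}_1,\dots,\widetilde{\gamma}_n$ remain in general position. The main technical difficulty will be the transversality step, i.e., verifying rigorously that the reflection-induced corrections at successive visits are analytically independent from the bare positional dependence of the first visit; the rest reduces to routine applications of the perturbation and uniqueness results established in Sections~3 and~4.
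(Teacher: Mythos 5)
The heart of Lemma \ref{only once} is arranging that the perturbed path visits some vertex near $V$ exactly once, and your argument leaves precisely this step as an unproven ``transversality argument.'' That is a genuine gap rather than a technicality, and the heuristic underlying it is flawed. You claim that $v_1(\lambda)$ depends only on where the perturbed curve meets ``the fixed incoming chord of $\gamma_{n+1}$,'' while the later visits acquire extra reflection-induced dependence. But you also insist that $\widetilde{\gamma}_{n+1}$ run from $x$ to the \emph{fixed} endpoint $y$. Once the table is perturbed near $V$, the billiard trajectory leaving $x$ in the original direction no longer ends at $y$; the path produced by Corollary \ref{cor: Neigborhood filling} has a readjusted initial angle at $x$, so the incoming chord to the first visit is not fixed and $v_1$ is coupled to $v_2,\dots,v_j$ through the same global shooting problem. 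The claimed decoupling therefore fails as stated, and the assertion that $\{\lambda: v_k(\lambda)=v_{k'}(\lambda)\text{ for some }k\ne k'\}$ is a proper ``analytic subset'' is unsupported: nothing in sight is analytic for finite $r$, and no derivative computation distinguishing the rates of the $v_k$ is supplied. (You also never address condition (5), non-perpendicularity at the vertices, though that is minor.)

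The paper's proof is built exactly to avoid this coupling. It keeps $V$ on the perturbed curve and rotates only the tangent line there by an angle $\alpha$, leaving the initial segment of $\gamma_{n+1}$ from $x$ to the first occurrence of $V$ literally unchanged; hence the first visit is \emph{exactly} $V$ for every $\alpha$, while the outgoing direction after that visit rotates by $2\alpha$. Using Corollary \ref{nonconjugacy } to make $V$ non-conjugate to itself along the subpaths of $\gamma_{n+1}$, Corollary \ref{cor: Neigborhood filling} shows that for each later reflection index there is at most one $\alpha$ for which the path returns to $V$ there, so excluding finitely many angles makes $V$ a once-visited vertex. The price is that the path now terminates at a nearby point $y_1\ne y$; this is repaired at the very end by a homothety composed with a rotation, both centered at $x$, sending $y_1$ to $y$, with $\widetilde{\gamma}_1,\dots,\widetilde{\gamma}_n$ re-obtained as paths from $x$ to $y_1$ via the nonconjugacy of $x$ and $y$ along $\gamma_1,\dots,\gamma_n$. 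If you want to rescue your scheme, the essential fixes are to drop the requirement that the locally perturbed $(n+1)$st path still end at $y$ (restoring the endpoint afterward by a global similarity), and to aim only for the first visit being distinct from the later ones rather than for all $j$ visits being pairwise distinct.
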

\begin{enumerate}
\item $d(\gamma_{i},\widetilde{\gamma}_{i})<\epsilon$ for $i=1,\dots,n+1,$
\item $\widetilde{\gamma}_{1},\dots,\widetilde{\gamma}_{n}$ are in general
position,
\item $\widetilde{\gamma}_{n+1}$ has a vertex $\widetilde{V}$ with ${\rm dist}(V,\widetilde{V})<\epsilon,$
where $\widetilde{V}$ is not a vertex of any of $\widetilde{\gamma}_{1},\dots,\widetilde{\gamma}_{n},$ 
\item $\widetilde{\gamma}_{n+1}$ passes through $\widetilde{V}$ only once,
and
\item $\widetilde{\gamma}_{n+1}$ is not perpendicular to $\widetilde{\sigma}$
at any of its vertices. \end{enumerate}
\begin{proof}
Let $\epsilon>0.$ By Corollary \ref{nonconjugacy }, we may assume
that $x$ and $y$ are not conjugate along any of $\gamma_{1},\dots,\gamma_{n},$
and $V$ is not conjugate to itself along any path contained in $\gamma_{n+1}.$
We will assume $\epsilon$ is sufficiently small so that $d(\gamma_{i},\widehat{\gamma}_{i})<\epsilon$
for $i=1,\dots,n$ implies that $V$ is not a vertex of any of $\widehat{\gamma}_{1},\dots,\widehat{\gamma}_{n};$
and, in addition, by Lemma \ref{lem:preservation of general position},
$d(\gamma_{i},\widehat{\gamma}_{i})<\epsilon$ for $i=1,\dots,n$
implies that $\widehat{\gamma}_{1},\dots,\widehat{\gamma}_{n}$ are
in general position. Thus condition (2) will follow from condition
(1) for $i=1,\dots,n.$ Also, condition (3) will follow from condition
(1) for $\epsilon$ sufficiently small$.$ By Corollary \ref{cor: Neigborhood filling}
and Lemma \ref{perturbed}, there exists $\delta>0$ such that for
any $i\in\{1,2,\dots,n\},$ any $\widehat{\sigma}\in\Sigma_{r}$ with
$d_{2}(\sigma,\widehat{\sigma})<\delta,$ and any $y_{1}\in\widehat{M}$
with dist$(y,y_{1})<\delta,$ where $\widehat{M}$ is the region bounded
by $\widehat{\sigma},$ there exist billiard paths $\widehat{\gamma}_{i}$
for $\widehat{M}$ from $x$ to $y_{1}$ with $d(\gamma_{i},\widehat{\gamma}_{i})<\epsilon.$
We may assume that $\delta$ is sufficiently small so that whenever
dist$(y,y_{1})<\delta$ and $d_{r}(\widehat{\sigma},\sigma)<\epsilon/2,$
the curve $\widetilde{\sigma}$ obtained from $\widehat{\sigma}$
by applying the composition of a homothety (uniform expansion or contraction)
centered at $x$ and a rotation of the plane centered at $x$ such
that $y_{1}$ gets mapped to $y$ satisfies $d_{r}(\widetilde{\sigma},\sigma)<\epsilon.$

Given $0<\eta<\epsilon/2,$ there exists $\alpha_{0}>0$ such that
for $|\alpha|<\alpha_{0},$ Proposition \ref{perturb vertex and tangent}
provides a $C^{r}$ perturbation $\sigma_{\alpha}$ of $\sigma$ such
that: $d_{r}(\sigma,\sigma_{\alpha})<\eta;$ $\sigma_{\alpha}$ agrees
with $\sigma$ outside a small neighborhood of $V$ that is chosen
sufficiently small that it contains no vertex of $\gamma_{n+1}$ other
than $V$ and it contains no vertices of $\gamma_{1},\dots,\gamma_{n};$
$V$ lies on $\sigma_{\alpha};$ the tangent line at $V$ for $\sigma_{\alpha}$
is obtained by rotating the tangent line at $V$ for $\sigma$ by
angle $\alpha$ in the counterclockwise direction; and the table $M_{\alpha}$
bounded by $\alpha$ contains $x$ and $y$ in its interior. Let $\gamma_{n+1,\alpha}$
be the billiard path for $M_{\alpha}$ obtained by starting with the
initial part of $\gamma_{n+1}$ from $x$ to the first occurrence
of $V$ along $\gamma_{n+1}$ and then continuing from $V$ with the
same number of reflections as the number of reflections $\gamma_{n+1}$
has after the first occurrence of $V$. Note that the signed angle
from the tangent vector of $\gamma_{n+1}$ to the tangent vector of
$\gamma_{n+1,\alpha}$ at the time that these paths leave $V$ for
the first time is $2\alpha.$ We end $\gamma_{n+1,\alpha}$ after
the last reflection at a point $y_{1}$ in $M_{\alpha}$ that is as
close as possible to $y.$ Suppose $\gamma_{n+1}$ passes through
$V$ the first time at reflection number $k_{1}$, the second time
at reflection number $k_{2},$ etc., up to the $s$th time at reflection
number $k_{s},$ where we count the reflections starting with the
first reflection after $x.$ By Corollary \ref{cor: Neigborhood filling},
we may assume that $\alpha_{0}$ and $\eta$ are sufficiently small
so that there are unique angles $\alpha_{1},\dots,\alpha_{s}$ with
$|\alpha_{j}|\le\alpha_{0}$ such that $\gamma_{n+1,\alpha}$ passes
through $V$ at reflection number $k_{j}$ for $j=1,\dots,s.$ We
may also assume that $\alpha_{0}$ and $\eta$ are sufficiently small
so that for $|\alpha|<\alpha_{0},$ we have $d(\gamma_{n+1,\alpha},\gamma_{n+1})<\delta,$
and, in addition, $d(\gamma_{n+1,\alpha},\gamma_{n+1})$ is sufficiently
small so that $\gamma_{n+1,\alpha}$ does not pass through $V$ at
the $i$th reflection for $i\notin\{k_{1},\dots,k_{s}\}.$ For the
rest of the proof, we fix a choice of $\alpha$ with $|\alpha|<\alpha_{0}$
such that $\alpha\notin\{\alpha_{1},\dots,\alpha_{s}\}$ and $\gamma_{n+1}$
is not perpendicular to the tangent line to $\sigma_{\alpha}$ at
$V$ at the first time that $\gamma_{n+1}$ visits $V.$ Let $\widehat{\gamma}_{n+1}=\gamma_{n+1,\alpha}$,
$\widehat{M}=M_{\alpha},$ and $\widehat{\sigma}=\sigma_{\alpha}.$
Then $d_{r}(\widehat{\sigma},\sigma)<\eta<\epsilon/2.$ Since $d(\widehat{\gamma}_{n+1},\gamma_{n+1})<\delta,$
the final endpoint $y_{1}$ of $\widehat{\gamma}_{n+1}$ satisfies
${\rm dist}(y_{1},y)<\delta.$ 

Let $\widehat{\gamma}_{1},\dots,\widehat{\gamma}_{n}$ be paths in
general position from $x$ to $y_{1}$ for the billiard table bounded
by $\widehat{\sigma}$ that we obtain from the first paragraph of
the proof. Then $\widehat{\gamma}_{1},\dots,\widehat{\gamma}_{n}$
do not pass through $V,$ $\widehat{\gamma}_{n+1}$ passes through
$V$ exactly once, and $\widehat{\gamma}_{n+1}$ is not perpendicular
to $\widehat{\sigma}$ at $V.$ We may assume that $\widehat{\gamma}_{n+1}$
is not perpendicular to $\widehat{\sigma}$ at any vertex other than
$V,$ because if it were, we could avoid this by replacing $\widehat{\gamma}_{n+1}$
by the shortest path within $\widehat{\gamma}_{n+1}$ that starts
at $x$, ends at $y_{1},$ and contains $V.$ We have now achieved
everything required, except that the paths $\widehat{\gamma}_{1},\dots,\widehat{\gamma}_{n+1}$
end at $y_{1}$ instead of $y.$ By the choice of $\delta$ in the
first paragraph, if we apply the composition of a homothety and a
rotation, both centered at $x,$ so that $y_{1}$ is sent to $y,$
then $\widehat{\sigma}$ is mapped to a curve $\widetilde{\sigma}$
with $d_{r}(\widetilde{\sigma},\sigma)<\epsilon,$ the billiard paths
$\widehat{\gamma}_{1},\dots,\widehat{\gamma}_{n+1}$ for $\widehat{M}$
get mapped to billiard paths $\widetilde{\gamma}_{1},\dots,\widetilde{\gamma}_{n+1}$
for the billiard table $\widetilde{M}$ bounded by $\widetilde{\sigma},$
and $V$ is sent to a vertex $\widetilde{V}$ of $\widetilde{\gamma}_{n+1}.$
Then $\widetilde{\gamma}_{1},\dots,\widetilde{\gamma}_{n+1}$ and
$\widetilde{V}$ satisfy conditions (1)--(5). 
\end{proof}
We now complete the proof of the main result of this section.
\begin{proof}
[Proof of Theorem \ref{theorem 1}.] If $n=1,$ we can take $\gamma_{1}$
to be the segment $xy.$ Then $\gamma_{1}$ has no vertices, and conditions
(GP1)--(GP4) and (NC) are clearly satisfied with $\sigma_{1}=\sigma.$ 

Let $\epsilon>0,$ and let $(\epsilon_{k})_{k=1}^{\infty}$ be a sequence
of positive numbers with sum less than $\epsilon.$ Assume $n$ is
a positive integer, $\sigma_{n}\in\Sigma_{r},$ $d_{r}(\sigma,\sigma_{n})<\epsilon_{1}+\epsilon_{2}+\cdots+\epsilon_{n-1},$
and we have $n$ billiard paths $\gamma_{1},\ldots,\gamma_{n}$ in
the table $M_{n}$ bounded by $\sigma_{n}$ that satisfy conditions
(GP1)--(GP4) and (NC). For the inductive argument, we will show that
there exists $\sigma_{n+1}\in\Sigma_{r}$ with $d_{r}(\sigma_{n},\sigma_{n+1})<\epsilon_{n}$
such that there are $n+1$ billiard paths in the table $M_{n+1}$
bounded by $\sigma_{n+1}$ that satisfy conditions (GP1)--(GP4) and
(NC). Each of the finitely many perturbations that we will make in
obtaining $M_{n+1}$ from $M_{n}$ can be made arbitrarily small in
the $d_{r}$ metric.

By Lemma \ref{lem:New Vertex} and Lemma \ref{only once}, there exists
a new table $\widetilde{M}_{n}$ on which there are paths $\widetilde{\gamma}_{1},\dots,\widetilde{\gamma}_{n},\gamma_{n+1}$
such that $\gamma_{n+1}$ has a vertex $V$ that is not a vertex of
any of $\widetilde{\gamma}_{1},\dots,\widetilde{\gamma}_{n}$ and
such that $\gamma_{n+1}$ passes through $V$ only once and is not
perpendicular to the boundary of $\widetilde{M}_{n}$ at any vertex.
In applying Lemma \ref{only once}, $\widetilde{\gamma}_{1},\ldots,\widetilde{\gamma}_{n}$
can be chosen as close to $\gamma_{1},\ldots,\gamma_{n}$ as we like,
and therefore by Lemma \ref{lem:preservation of general position}
we may assume that $\widetilde{\gamma}_{1},\ldots,\widetilde{\gamma}_{n}$
still satisfy (GP1)--(GP4) and (NC).

Next, we describe a perturbation of the table in a small neighborhood
of $V$ (that does not contain any vertices of $\widetilde{\gamma}_{1},\dots,\widetilde{\gamma}_{n})$
and a perturbation $\widetilde{\gamma}_{n+1}$ of $\gamma_{n+1}$
such that $\widetilde{\gamma}_{n+1}$ is a billiard path on the perturbed
table and such that $\widetilde{\gamma}_{1},\ldots,\widetilde{\gamma}_{n},\widetilde{\gamma}_{n+1}$
satisfy condition (GP1) on the perturbed table. We may assume the
neighborhood of $V$ where this perturbation takes place is small
enough that $\widetilde{\gamma}_{1},\ldots,\widetilde{\gamma}_{n}$
are unchanged. To obtain (GP1), it suffices to ensure that $\widetilde{\gamma}_{n+1}$
does not pass through any vertices of $\widetilde{\gamma}_{1},\ldots,\widetilde{\gamma}_{n}$.
Suppose there exist vertices of $\widetilde{\gamma}_{1},\ldots,\widetilde{\gamma}_{n}$
that occur in $\gamma_{n+1}.$ (If no such vertices exist, we are
finished with condition (GP1).) We may label these vertices so that
$P_{1},\dots,P_{k}$ occur along $\gamma_{n+1}$ between $x$ and
$V$, and $P_{k+1},\dots,P_{\ell}$ occur between $V$ and $y$. Now
by Corollary \ref{nonconjugacy } we may assume that $\widetilde{M}_{n}$
is constructed so that $x$ and $P_{i}$ are not conjugate along $\gamma_{n+1}$
for $i=1,\dots,k$, and $P_{i}$ and $y$ are not conjugate along
$\gamma_{n+1}$ for $i=k+1,\dots,\ell$. Therefore, by Corollary \ref{cor: Neigborhood filling}
there exists a billiard path $\tau_{x}$ starting at $x$ with initial
direction making a small but nonzero angle with that of $\gamma_{n+1}$
and ending at the boundary of $\widetilde{M}_{n}$ near $V$ that
avoids $P_{1},\dots,P_{k}$. Similarly, there exists a billiard path
$\tau_{y}$ starting at $y$ with initial direction making a small
but nonzero angle with that of $-\gamma_{n+1}$ and ending at the
boundary of $\widetilde{M}_{n}$ near $V$ that avoids $P_{k+1},\dots,P_{\ell}$.
Since $\gamma_{n+1}$ is not perpendicular to the boundary of $\widetilde{M}_{n}$
at $V$, the lines containing the segments of $\gamma_{n+1}$ with
endpoint $V$ intersect transversely at $V.$ Therefore the final
segments of the paths $\tau_{x}$ and $\tau_{y}$, possibly slightly
extended beyond $\widetilde{M}_{n},$ intersect at some point $\widetilde{V}$
near $V$, and we end these final segments at $\widetilde{V.}$ The
angle bisector of the final segments of $\tau_{x}$ and $\tau_{y}$
is arbitrarily close to the angle bisector of the segments of $\gamma_{n+1}$
that have $V$ as an endpoint. By Proposition \ref{perturb vertex and tangent},
we can now make a small $C^{r}$ perturbation of the table in a small
neighborhood of $V$ so that the paths $\tau_{x}$ and $-\tau_{y}$
can be joined at $\widetilde{V}$ to form a billiard path $\widetilde{\gamma}_{n+1}$
for the new table, which we call $\widetilde{\widetilde{M}}_{n},$
and $\widetilde{\gamma}_{n+1}$ avoids the vertices of $\widetilde{\gamma}_{1},\ldots,\widetilde{\gamma}_{n}.$
This perturbation does not affect $\widetilde{\gamma}_{1},\ldots,\widetilde{\gamma}_{n}.$
Therefore $\widetilde{\gamma}_{1},\dots,,\widetilde{\gamma}_{n+1}$
satisfy condition (GP1) for $\widetilde{\widetilde{M}}_{n}.$ Moreover,
$\widetilde{\gamma}_{n+1}$ passes through $\widetilde{V}$ only once
and $\widetilde{\gamma}_{n+1}$ is not perpendicular to the boundary
of $\widetilde{\widetilde{M}}_{n}$ at any vertex. 

Now we want to satisfy condition (GP2). Suppose $\widetilde{\gamma}_{n+1}$
passes through a vertex $P\ne\widetilde{V}$ more than once. By condition
(GP1), $P$ is not a vertex of any of $\widetilde{\gamma}_{1},\dots,\widetilde{\gamma}_{n}.$
Then we can replace $V$ by $P$ in Lemma \ref{only once} and obtain
a small $C^{r}$ perturbation of the boundary of the table, and small
perturbations of $\widetilde{\gamma}_{1},\dots,\widetilde{\gamma}_{n+1}$
maintaining condition (GP1) for the $n+1$ paths, maintaining conditions
(GP2)--(GP4) and (NC) for the first $n$ paths, and having the $(n+1)$st
path go through a vertex $\widetilde{P}$ near $P$ only once. This
procedure can be repeated until the $(n+1)$st path goes through each
vertex only once. Note that the procedure ends after finitely many
steps because the number of reflections made by the $(n+1)$st path
is unchanged by these perturbations. Thus we obtain condition (GP2)
for paths $\widehat{\gamma}_{1},\dots,\widehat{\gamma}_{n+1}$ on
a table $\widehat{M}_{n}.$ Again, we may assume that $\widehat{\gamma}_{n+1}$
is not perpendicular to the boundary of $\widehat{M}_{n}$ at any
vertex.

We now proceed to achieve conditions (GP3)--(GP4). A segment $\eta$
of $\widehat{\gamma}_{n+1}$ can cause a violation of condition (GP3)
or (GP4) by passing through any of the following (finitely many) points:
(i) an intersection point (other than $x$ or $y$) of two segments
of $\widehat{\gamma}_{1},\dots,\widehat{\gamma}_{n};$ (ii) an intersection
point (other than $x$ or $y$) of one of $\widehat{\gamma}_{1},\dots,\widehat{\gamma}_{n}$
and a segment of $\widehat{\gamma}_{n+1}$ different from $\eta;$
(iii) an intersection point (other than $x$ or $y$) of two distinct
segments of $\widehat{\gamma}_{n+1}$ other than $\eta;$ (iv) $x$
(if $\eta$ is not the initial segment of $\widehat{\gamma}_{n+1});$
and (v) $y$ (if $\eta$ is not the final segment of $\widehat{\gamma}_{n+1}).$
If such a violation occurs, then we can apply Corollary \ref{parallel perturbation}
if $\eta$ is a chord of $\widehat{\gamma}_{n+1}$ and Corollary \ref{perturb initial segment}
if $\eta$ is an initial or final segment of $\widehat{\gamma}_{n+1}$
in order to avoid these violations of conditions (GP3)--(GP4). This
can be done while maintaining conditions (GP1)--(GP2) for $\widehat{\gamma}_{1},\dots,\widehat{\gamma}_{n}$
and the perturbed version of $\widehat{\gamma}_{n+1},$ and avoiding
any new violations of conditions (GP3)--(GP4). This procedure can
be done to each segment of $\widehat{\gamma}_{n+1}$ that causes a
violation of conditions (GP3)--(GP4). We can therefore achieve conditions
(GP3)--(GP4) for a new table $\widehat{\widehat{M}}_{n},$ a perturbed
version $\widehat{\widehat{\gamma}}_{n+1}$ of the $(n+1)$st path,
and the same paths $\widehat{\gamma}_{1},\dots,\widehat{\gamma}_{n}.$ 

Since condition (NC) is already satisfied for $\widehat{\gamma}{}_{1},\dots,\widehat{\gamma}_{n},$
any violation of condition (NC) for the $n+1$ paths $\widehat{\gamma}_{1},\dots,\widehat{\gamma}_{n},\widehat{\widehat{\gamma}}_{n+1}$
on $\widehat{\widehat{M}}_{n}$ would mean there exist distinct vertices
$p,q$ of $\widehat{\gamma}_{1},\dots,\widehat{\gamma}_{n+1}$ that
are collinear with $y$, where $p$ is a vertex of $\widehat{\gamma}_{n+1}$.
Note that the segment $pq$ cannot be a chord of $\widehat{\gamma}_{n+1},$
because then condition (GP4) would be violated. If $p$ is not an
endpoint of the final segment of $\hat{\gamma}_{n+1}$, then we can
apply Corollary \ref{parallel perturbation} to a chord through $p$
to obtain a perturbed version of $\widehat{\gamma}_{n+1}$ so that
the new vertex near $p$ is not collinear with $y$ and $q$. If $p$
is an endpoint of the final segment of $\widehat{\gamma}_{n+1}$,
then we can apply Corollary \ref{perturb initial segment} to achieve
this result. Again, all perturbations can be done while maintaining
conditions (GP1)--(GP4) and not introducing any new violations of
condition (NC). The paths $\widehat{\gamma}_{1},\dots,\widehat{\gamma}_{n}$
and the perturbed version of $\widehat{\widehat{\gamma}}_{n+1}$ are
then labeled $\gamma_{1},\dots,\gamma_{n+1}$ and the perturbed version
of $\widehat{\widehat{M}}_{n}$ is labeled $M_{n+1}.$ Thus conditions
(GP1)--(GP4) and (NC) are satisfied for $\gamma_{1},\dots,\gamma_{n+1}$
on $M_{n+1}.$

The boundary $\sigma_{n+1}$ of $M_{n+1}$ is in $\Sigma_{r}$ and
can be chosen to satisfy $d_{r}(\sigma_{n},\sigma_{n+1})<\epsilon_{n}$
since we made arbitrarily small $C^{r}$ perturbations of the boundaries
of the tables at each step in the proof.
\end{proof}

\section{The Main Result: Generic Insecurity }

Let $\Sigma_{r}$ and $d_{r}$ be as described in Section 3, and assume
$r\ge2$. Then $\Sigma_{r}$ is a dense open subset of the complete
metric space $(\Sigma_{r}^{0},d_{r}),$ where $\Sigma_{r}^{0}$ is
the set of simple closed $C^{r}$ curves in the plane with nonnegative
curvature. Thus, by the Baire Category Theorem, in the $C^{r}$ topology
the intersection of countably many dense open subsets of $\Sigma_{r}$
is dense in $\Sigma_{r}.$ Theorem \ref{thm:main result} contains
the generic insecurity result described in Section 1. In fact, we
prove slightly more: the set $A$ in Theorem \ref{thm:main result}
is not just a dense $G_{\delta}$ subset of $\Sigma_{r}$ in the $C^{r}$
topology---it is the intersection of countably many sets that are
$C^{2}$ open and $C^{r}$ dense in $\Sigma_{r}.$
\begin{lem}
\label{lemma open set} Let $\sigma_{n}\in\Sigma_{2}$ and suppose
$x$ and $y$ are distinct points in the interior of the table $M_{n}$
bounded by $\sigma_{n}.$ Assume there exist $n$ billiard paths $\gamma_{1},\ldots\gamma_{n}$
for $M_{n}$ from $x$ to $y$ that are in general position, and $x$
and $y$ are not conjugate along any of these paths. Then there exists
an open neighborhood $\mathcal{N}$ of $\sigma_{n}$ in $\Sigma_{2}$
with the $C^{2}$ topology such that for every $\widehat{\sigma}\in\mathcal{N}$,
$x$ and $y$ are still in the interior of the table $\widehat{M}$
bounded by $\widehat{\sigma}$, and there exist $n$ billiard paths
$\widehat{\gamma}_{1},\dots,\widehat{\gamma}_{n}$ for $\widehat{M}$
from $x$ to $y$ that are in general position.\end{lem}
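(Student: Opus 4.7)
The plan is to invoke Corollary \ref{cor: Neigborhood filling} separately for each of the $n$ billiard paths $\gamma_i$, obtaining a uniform $C^2$ neighborhood of $\sigma_n$ on which perturbed billiard paths $\widehat{\gamma}_i$ exist and depend continuously on the perturbed boundary, and then to conclude by Lemma \ref{lem:preservation of general position} that general position is preserved under sufficiently small perturbations.

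First I would apply Lemma \ref{lem:preservation of general position} to $\gamma_1,\dots,\gamma_n$ to obtain a threshold $\epsilon_0>0$ such that any polygonal paths $\widehat{\gamma}_1,\dots,\widehat{\gamma}_n$ from $x$ to $y$, with $\widehat{\gamma}_i$ having the same number of vertices $m_i$ as $\gamma_i$ and $d(\gamma_i,\widehat{\gamma}_i)<\epsilon_0$ for every $i$, are automatically in general position. The task then reduces to producing such $\widehat{\gamma}_i$ for each sufficiently small $C^2$ perturbation $\widehat{\sigma}$ of $\sigma_n$.

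For each fixed $i$, since $x$ and $y$ are not conjugate along $\gamma_i$, Corollary \ref{cor: Neigborhood filling}, applied with $\tau=\gamma_i$, $p=x$, $q=y$, and a suitably small $\epsilon_1$, furnishes constants $\eta_i,\delta_i>0$ and $\epsilon_i\in(0,\epsilon_1)$ such that whenever $\widehat{\sigma}\in\Sigma_2$ with $d_2(\sigma_n,\widehat{\sigma})<\eta_i$ and $y\in\widehat{M}$, there exists a unique billiard path $\widehat{\gamma}_i$ for $\widehat{M}$ from $x$ to $y$ making angle less than $\epsilon_i$ with $\gamma_i$ at $x$ and experiencing exactly $m_i$ reflections (the reflection count is exact because $y$ lies in the interior of $\widehat{M}$, ruling out the ``at reflection $m_i+1$'' alternative in the corollary). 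By repeated applications of Lemma \ref{perturbed}, the closeness of the initial direction propagates along the trajectory, and after shrinking $\eta_i$ we may arrange that $d(\gamma_i,\widehat{\gamma}_i)<\epsilon_0$.

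Finally I would set $\eta=\min_i\eta_i$, shrinking further if necessary so that $x,y\in\mathrm{Int}(\widehat{M})$ whenever $d_2(\sigma_n,\widehat{\sigma})<\eta$ (which is possible since $x$ and $y$ are at positive distance from $\partial M_n$). The $C^2$-open set $\mathcal{N}:=\{\widehat{\sigma}\in\Sigma_2:d_2(\sigma_n,\widehat{\sigma})<\eta\}$ is the required neighborhood. The main obstacle is the bookkeeping around reflection counts and the simultaneous continuity of all $n$ paths; everything else is a direct citation of Corollary \ref{cor: Neigborhood filling} and Lemma \ref{lem:preservation of general position}. No new perturbation arguments are needed, because the tools from the preceding sections were designed precisely to package this step.
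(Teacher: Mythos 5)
Your proposal is correct and follows essentially the same route as the paper's proof: reduce to producing, for each $i$, a nearby billiard path with the same number of vertices via Lemma \ref{lem:preservation of general position}, then combine Corollary \ref{cor: Neigborhood filling} (using the nonconjugacy hypothesis) with Lemma \ref{perturbed} to get such a path for every $\widehat{\sigma}$ in a small $C^{2}$ neighborhood, and intersect the $n$ neighborhoods. The only difference is cosmetic ordering (the paper first fixes the angle threshold via Lemma \ref{perturbed} and then invokes Corollary \ref{cor: Neigborhood filling}, while you do the reverse), and your observation that $y\in\mathrm{Int}(\widehat{M})$ forces exactly $m_i$ reflections is a correct reading of the corollary.
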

\begin{proof}
Let $\epsilon>0$ be as in Lemma \ref{lem:preservation of general position}.
Then it suffices to show that there exists a $C^{2}$ open neighborhood
$\mathcal{N}$ of $\sigma_{n}$ in $\Sigma_{2}$ such that for all
$\widehat{\sigma}\in\mathcal{N},$ there exist billiard paths $\widehat{\gamma}_{1},\dots,\widehat{\gamma}_{n}$
for the table $\widehat{M}$ bounded by $\widehat{\sigma}$ from $x$
to $y$ such that $\widehat{\gamma}_{i}$ has the same number of vertices
as $\gamma_{i}$ and $d(\gamma_{i},\widehat{\gamma}_{i})<\epsilon$
for $i=1,\dots,n.$ Fix a choice of $i\in\{1,\dots,n\},$ and let
$k_{i}$ be the number of vertices of $\gamma_{i}.$ It follows from
Lemma \ref{perturbed} that there exists $\alpha_{i}>0$ and a $C^{2}$
open neighborhood $\widehat{\mathcal{N}}_{i}$ of $\sigma_{n}$ such
that for any billiard path $\widehat{\gamma}_{i}$ for any $\widehat{\sigma}\in\widehat{\mathcal{N}}_{i}$
that starts at $x$ making angle less than $\alpha_{i}$ with $\gamma_{i}$
at $x,$ the first $k_{i}$ vertices of $\widehat{\gamma}_{i}$ are
within distance $\epsilon$ of the corresponding vertices of $\gamma_{i}.$
Moreover, by Corollary \ref{cor: Neigborhood filling}, if $\mathcal{N}_{i}\subset\widehat{\mathcal{N}}_{i}$
is a sufficiently small $C^{2}$ open neighborhood of $\sigma_{n},$
then for $\widehat{\sigma}\in\mathcal{N}_{i}$ there exists a billiard
path $\widehat{\gamma}_{i}$ starting at $x$ and making angle less
than $\alpha_{i}$ with $\gamma_{i}$ at $x$ such that $\widehat{\gamma}_{i}$
ends at $y$ after $k_{i}$ reflections. Thus $\mathcal{N}:=\cap_{i=1}^{n}\mathcal{N}_{i}$
is the desired neighborhood of $\sigma_{n}.$ \end{proof}
\begin{cor}
\label{cor:generic table} Given two distinct points $x$ and $y$
in $\mathbb{R}^{2},$ let $\Sigma_{(x,y),r}$ be the set of curves
$\sigma$ in $\Sigma_{r}$ such that $x$ and $y$ are contained in
the interior of the table bounded by $\sigma.$ Then there exists
a dense $G_{\delta}$ subset $A_{(x,y),r}$ of $\Sigma_{(x,y),r}$
in the $C^{r}$ topology such that for every $\tau\in A_{(x,y),r}$
and every positive integer $n$ there exist $n$ billiard paths from
$x$ to $y$ for the table $M(\tau)$ bounded by $\tau$ that are
in general position. In particular, for every $\tau\in A_{(x,y),r},$
the pair $(x,y)$ is insecure for $M(\tau).$ The set $A_{(x,y),r}$
may be chosen to be an intersection of countably many $C^{2}$ open
and $C^{r}$ dense subsets of $\Sigma_{(x,y),r.}$\end{cor}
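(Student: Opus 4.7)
The plan is to display $A_{(x,y),r}$ as an intersection $\bigcap_{n \ge 1} V_n$ of countably many subsets of $\Sigma_{(x,y),r}$ that are simultaneously $C^2$-open and $C^r$-dense, and then invoke the Baire Category Theorem. I will take $V_n$ to consist of those $\sigma \in \Sigma_{(x,y),r}$ for which there exist $n$ billiard paths from $x$ to $y$ in $M(\sigma)$ that are in general position and along which $x$ is not conjugate to $y$ (the extra nonconjugacy condition is included only to enable Lemma \ref{lemma open set}). Note that $\Sigma_{(x,y),r}$ is $C^0$-open, hence $C^r$-open, in $\Sigma_r$, which in turn is open in the complete metric space $(\Sigma_r^0, d_r)$, so Baire's theorem is available in $\Sigma_{(x,y),r}$.

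For the $C^r$-density of $V_n$, I would start with an arbitrary $\sigma \in \Sigma_{(x,y),r}$ and $\epsilon > 0$, and apply Theorem \ref{theorem 1} to obtain a curve $\sigma' \in \Sigma_r$ with $d_r(\sigma, \sigma') < \epsilon/2$ admitting $n$ billiard paths $\gamma_1, \ldots, \gamma_n$ from $x$ to $y$ in general position. Because (GP1) guarantees that the $\gamma_i$ have pairwise disjoint vertex sets, I would then apply Corollary \ref{nonconjugacy} successively to each $\gamma_i$, using perturbations supported in tiny neighborhoods of its vertices. Each such perturbation preserves the other paths (their vertices lie outside the support, and their segments do too if the support is taken small enough) and, by the ``moreover'' clause of Corollary \ref{nonconjugacy}, preserves the nonconjugacy arranged at earlier steps. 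Lemma \ref{lem:preservation of general position} ensures that general position survives these small perturbations, yielding a curve in $V_n$ within $\epsilon$ of $\sigma$ in $d_r$.

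For the $C^2$-openness of $V_n$, this is exactly Lemma \ref{lemma open set}, which furnishes a $C^2$-open neighborhood of every element of $V_n$ inside which $n$ billiard paths from $x$ to $y$ in general position persist. With $V_n$ shown to be $C^2$-open and $C^r$-dense for every $n$, Baire category produces the dense $G_\delta$ set $A_{(x,y),r} := \bigcap_n V_n$ of the required form. The insecurity of $(x,y)$ for $\tau \in A_{(x,y),r}$ then follows from the observation of Section 2: a candidate finite blocking set $B$ of cardinality $b$ can meet at most $2b$ paths of any collection of $n$ paths in general position, since (GP3) forbids triple intersections outside $\{x, y\}$, so choosing $n > 2b$ yields a path from $x$ to $y$ avoiding $B$.

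The main obstacle I anticipate is the bookkeeping in the density step. The successive applications of Corollary \ref{nonconjugacy} used to make each $\gamma_i$ nonconjugate must not disturb (a) the existence and general position of the other $\gamma_j$, nor (b) the previously achieved nonconjugacy along $\gamma_j$ for $j < i$. Both requirements rest on the disjointness of vertex sets provided by (GP1) together with the final stability clause of Corollary \ref{nonconjugacy}, which tolerates further small $C^2$-perturbations agreeing to first order at the relevant vertices; one must also take the supports of the perturbations small enough in $C^r$ that the segments of the unperturbed paths remain inside the perturbed table.
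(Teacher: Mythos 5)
Your proposal follows essentially the same route as the paper: $C^r$-density via Theorem \ref{theorem 1} followed by successive applications of Corollary \ref{nonconjugacy } (using (GP1) to keep the supports of the curvature perturbations disjoint from the other paths' vertices), $C^2$-openness via Lemma \ref{lemma open set}, and a Baire intersection over $n$. One small imprecision: the set $V_n$ as you define it \emph{includes} the nonconjugacy condition, but Lemma \ref{lemma open set} only guarantees that the $n$ paths in general position persist on a $C^2$-neighborhood of a point of $V_n$ --- it does not guarantee that nonconjugacy persists, so that neighborhood need not lie in $V_n$ and you have not actually shown $V_n$ is open. The paper sidesteps this by taking $A_{(x,y),r}=\bigcap_n\bigcup_{\epsilon}\bigcup_{\sigma}\mathcal{N}(\sigma,n,\epsilon)$, i.e.\ the union of the neighborhoods themselves, which is open by construction, dense by the density of $V_n$, and consists entirely of curves admitting $n$ paths in general position (nonconjugacy being only a scaffold and not needed in the final set); replacing your $V_n$ by this union repairs the argument with no other changes. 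Your closing counting argument for insecurity (a blocking set of cardinality $b$ meets at most $2b$ of the paths by (GP3)) is correct and matches the observation in Section 2 of the paper.
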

\begin{proof}
Let $n$ be a positive integer, $\epsilon>0,$ and $\sigma\in\Sigma_{(x,y),r}.$
By Theorem \ref{theorem 1}, there exists $\sigma_{n}\in\Sigma_{(x,y),r}$
with $d_{r}(\sigma,\sigma_{n})<\epsilon$ such that there exist $n$
billiard paths $\gamma_{1},\dots,\gamma_{n}$ from $x$ to $y$ for
the table bounded by $\sigma_{n}$ that are in general position. By
Corollary \ref{nonconjugacy }, we may assume that $x$ and $y$ are
not conjugate along any of $\gamma_{1},\dots,\gamma_{n}$ in the billiard
table bounded by $\sigma_{n}.$ Now by Lemma \ref{lemma open set}
there exists a $C^{2}$ neighborhood $\mathcal{N}=\mathcal{N}(\sigma,n,\epsilon)$
of $\sigma_{n}$ in $\Sigma_{(x,y),r}$ such that for every $\widehat{\sigma}\in\mathcal{N}$
 there are $n$ billiard paths for $M(\widehat{\sigma})$ from $x$
to $y$ that are in general position. Then $A_{(x,y),r}:=\cap_{n=1}^{\infty}\cup_{\epsilon>0}\cup_{\sigma\in\Sigma_{(x,y),r}}\mathcal{N}(\sigma,n,\epsilon)$
is a dense $G_{\delta}$ subset of $\Sigma_{(x,y),r}$ satisfying
the conclusion of the Corollary.
\end{proof}
For $k=2$ or $r,$ we refer to the product topology on $\Sigma_{r}^{0}\times\mathbb{R}^{2}\times\mathbb{R}^{2}$
obtained from the $C^{k}$ topology on $\Sigma_{r}^{0}$ and the usual
topology on $\mathbb{R}^{2}\times\mathbb{R}^{2}$ as the $C^{k}$
topology. The $C^{r}$ topology on $\Sigma_{r}^{0}\times\mathbb{R}^{2}\times\mathbb{R}^{2}$
is the topology of a complete metric space. 
\begin{thm}
\label{thm:main result}For $r\ge2$ there exists a dense $G_{\delta}$
subset $A$ of $\Sigma_{r}$ in the $C^{r}$ topology such that if
$\sigma\in A$ and $M(\sigma)$ is the billiard table bounded by $\sigma,$
then there is a dense $G_{\delta}$ subset $\mathcal{R}(\sigma)$
of $M(\sigma)\times M(\sigma)$ with the topology induced from $\mathbb{R}^{2}\times\mathbb{R}^{2}$
such that for each $(x,y)\in\mathcal{R}(\sigma)$ and each positive
integer $n,$ there are $n$ billiard paths for $M(\sigma)$ from
$x$ to $y$ that are in general position. In particular, every pair
$(x,y)\in\mathcal{R}(\sigma)$ is insecure for $M(\sigma).$ The set
$A$ may be chosen to be an intersection of countably many $C^{2}$
open and $C^{r}$ dense subsets of $\Sigma_{r}.$ \end{thm}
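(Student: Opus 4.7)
The plan is to package the per-pair genericity result of Corollary \ref{cor:generic table} into a joint residual set in the product space $\Sigma_r \times \mathbb{R}^2 \times \mathbb{R}^2$, and then apply the Kuratowski--Ulam Theorem to extract the desired $A \subset \Sigma_r$ together with the fibered residual sets $\mathcal{R}(\sigma)$.

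First I set up the ambient space. Let $\Omega \subset \Sigma_r \times \mathbb{R}^2 \times \mathbb{R}^2$ denote the subset of triples $(\sigma,x,y)$ with $x \neq y$ and both $x,y \in \mathrm{Int}(M(\sigma))$. This set is $C^0$-open in the product, hence $C^r$-open; viewed as an open subset of a complete metric space, it is a Baire space. For each positive integer $n$, let $\mathcal{G}_n \subset \Omega$ be the set of triples $(\sigma,x,y)$ for which there exist $n$ billiard paths from $x$ to $y$ in $M(\sigma)$ that are in general position and along each of which $x,y$ are nonconjugate.

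The key step is to show that each $\mathcal{G}_n$ is $C^2$-open and $C^r$-dense in $\Omega$. Density is immediate from Theorem \ref{theorem 1} (which provides, with $x,y$ fixed, a $C^r$-small perturbation of $\sigma$ yielding $n$ general-position paths) followed by Corollary \ref{nonconjugacy} applied successively to each path to enforce nonconjugacy, while preserving general position via Lemma \ref{lem:preservation of general position}. Openness is where Lemma \ref{lemma open set} is the main tool: if $(\sigma_0,x_0,y_0) \in \mathcal{G}_n$ with witnessing paths $\gamma_1,\ldots,\gamma_n$, that lemma produces a $C^2$-neighborhood of $\sigma_0$ in which $n$ general-position paths from $x_0$ to $y_0$ persist; to also allow perturbation of the endpoints, I use Corollary \ref{cor: Neigborhood filling} (applied forward from $x$ and symmetrically, by reversing the roles of initial and final points, to allow perturbation of $y$) together with Lemma \ref{lem:preservation of general position}, and I note that nonconjugacy is stable under small $C^2$ perturbations of $\sigma$ since the focusing recursion (\ref{eq:FocusingRecursion}) depends continuously on the curvatures and chord lengths. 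Consequently $\mathcal{G} := \bigcap_n \mathcal{G}_n$ is a dense $G_\delta$ subset of $\Omega$ in the $C^r$ topology, realized as an intersection of countably many $C^2$-open, $C^r$-dense sets.

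Now apply the Kuratowski--Ulam Theorem to $\mathcal{G}$ inside $\Sigma_r \times (\mathbb{R}^2 \times \mathbb{R}^2)$: since the complement of $\mathcal{G}$ in $\Omega$ is meager and has the Baire property, the set
\[
A := \{\sigma \in \Sigma_r : \mathcal{G}_\sigma \text{ is residual in } \Omega_\sigma\}
\]
is residual in $\Sigma_r$. For $\sigma \in A$ I set $\mathcal{R}(\sigma) := \mathcal{G}_\sigma$; since the diagonal and $\partial M(\sigma) \times M(\sigma) \cup M(\sigma) \times \partial M(\sigma)$ are closed and nowhere dense in $M(\sigma) \times M(\sigma)$, $\mathcal{R}(\sigma)$ remains a dense $G_\delta$ of $M(\sigma) \times M(\sigma)$. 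Insecurity of each $(x,y) \in \mathcal{R}(\sigma)$ follows because $n$ paths in general position force any candidate blocking set to have at least $\lceil n/2 \rceil$ points — by condition (GP3) a single interior point can block at most two of the paths — so no finite blocking set can exist. The refined statement, that $A$ is a countable intersection of $C^2$-open and $C^r$-dense subsets of $\Sigma_r$, is inherited from the corresponding structure of $\bigcap_n \mathcal{G}_n$: taking a countable base $\{U_j\}$ for the topology of $(\mathbb{R}^2 \times \mathbb{R}^2) \setminus \Delta$ and using that each $\mathcal{G}_n$ is $C^2$-open, the sets $\{\sigma : \mathcal{G}_n \cap (\{\sigma\} \times U_j) \neq \emptyset\}$ are $C^2$-open, and their countable intersection (over $n,j$) gives $A$ up to a meager set.

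The main obstacle is the openness of $\mathcal{G}_n$ in the joint $C^2$ topology: one must combine nonconjugacy with continuous dependence of the billiard flow on both the table \emph{and} both endpoints, which is exactly the content of Corollary \ref{cor: Neigborhood filling}, but applied simultaneously to all $n$ paths and to both endpoints. Everything else is standard bookkeeping.
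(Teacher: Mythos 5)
Your proposal is correct and follows essentially the same route as the paper: define fiberwise sets $\mathcal{G}_n$ in the product $\Sigma_r\times\mathbb{R}^2\times\mathbb{R}^2$ that are $C^2$-open (via Lemma \ref{lemma open set}, Corollary \ref{cor: Neigborhood filling}, and stability of nonconjugacy) and $C^r$-dense (via Theorem \ref{theorem 1} and Corollary \ref{nonconjugacy }), then run the Kuratowski--Ulam argument through a countable base to extract $A$ and $\mathcal{R}(\sigma)$. The only implementation difference is that you handle perturbation of the endpoints by applying Corollary \ref{cor: Neigborhood filling} at both ends, whereas the paper reduces endpoint perturbations to table perturbations by a homothety-plus-rigid-motion trick; also, for the refined claim you should simply \emph{define} $A$ as the countable intersection of the $C^2$-open, $C^r$-dense sets $\{\sigma:\mathcal{G}_n\cap(\{\sigma\}\times U_j)\ne\emptyset\}$ rather than identify it with that intersection only ``up to a meager set.''
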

\begin{proof}
Consider $\Sigma_{r}\times B,$ where $B=\{(x,y)\in\mathbb{R}^{2}\times\mathbb{R}^{2}:x\ne y\},$
with the $C^{r}$ topology induced from $\Sigma_{r}^{0}\times\mathbb{R}^{2}\times\mathbb{R}^{2}.$
Let $\mathcal{G}=\{(\sigma,(x,y))\in\Sigma_{r}\times B:x{\rm \ and\ }y{\rm \ do\ not\ lie\ on\ }\sigma\}$
and let $\mathcal{G}_{0}=\{(\sigma,(x,y))\in\Sigma_{r}\times B:(x,y)\in{\rm Int}(M(\sigma))\times{\rm Int}(M(\sigma))\}.$
Then $\mathcal{G}$ is a $C^{2}$ open and $C^{r}$ dense subset of
$\Sigma_{r}^{0}\times\mathbb{R}^{2}\times\mathbb{R}^{2},$ and $\mathcal{G\setminus\mathcal{G}}_{0}$
is $C^{2}$ open. If $(\sigma,(x,y))\in\mathcal{G}_{0},$ then as
in the proof of Corollary \ref{cor:generic table}, if we are given
$\epsilon>0$ and a positive integer $n,$ there exist $\sigma_{n}\in\Sigma_{r}$
with $d_{r}(\sigma,\sigma_{n})<\epsilon$ and a $C^{2}$ open neighborhood
$\mathcal{\mathcal{N}=N}(\sigma,n,\epsilon,(x,y))$ of $\sigma_{n}$
in $\Sigma_{r}$ such that for every $\widehat{\sigma}\in\mathcal{N}$
there exist $n$ billiard paths from $x$ to $y$ for $M(\widehat{\sigma})$
that are in general position. Note that if we make a sufficiently
small perturbation of $(x,y)$ to $(\widehat{x},\widehat{y})$ in
addition to a sufficiently small $C^{2}$ perturbation of $\sigma_{n}$
to $\widetilde{\sigma}$$,$ we will still obtain $n$ billiard paths
from $\widehat{x}$ to $\widehat{y}$ in $M(\widetilde{\sigma})$
that are in general position, because we may apply a transformation
$T$ of $\mathbb{R}^{2}$ such that $T$ is the composition of a homothety
and a rigid motion, both close to the identity, such that $T$ maps
$\widehat{x}$ and $\widehat{y}$ to $x$ and $y,$ respectively,
and maps $\widetilde{\sigma}$ to a curve in $\mathcal{N}.$ Thus
there is a $C^{2}$ open neighborhood $\widetilde{\mathcal{N}}=\widetilde{\mathcal{N}}(\sigma,n,\epsilon,(x,y))$
of $(\sigma_{n},(x,y))$ in $\mathcal{G}_{0}$ such that for each
$(\widehat{\sigma},(\widehat{x},\widehat{y}))\in\mbox{\ensuremath{\widetilde{\mathcal{N}}}}$
there are $n$ billiard paths for $M(\widehat{\sigma})$ from $\widehat{x}$
to $\widehat{y}$ that are in general position. Let $\mathcal{G}_{n}=(\mathcal{G}\setminus\mathcal{G}_{0})\cup(\cup_{\epsilon>0}\cup_{(\sigma,(x,y))\in\mathcal{G}_{0}}\widetilde{\mathcal{N}}(\sigma,n,\epsilon,(x,y))$.
Then $\mathcal{G}_{n}$ is a $C^{2}$ open and $C^{r}$ dense subset
of $\mathcal{G}.$ Thus there exists a subset $A_{n}$ of $\Sigma_{r}$
which is the intersection of countably many $C^{2}$ open and $C^{r}$
dense subsets of $\Sigma_{r}$ such that for $\sigma\in A_{n},$ $\{(x,y)\in\mathbb{R}^{2}\times\mathbb{R}^{2}:(\sigma,(x,y))\in\mathcal{G}_{n}\}$
is a dense $G_{\delta}$ subset of $\mathbb{R}^{2}\times\mathbb{R}^{2}.$
(This follows from the proof of the Kuratowski-Ulam Theorem, as presented
in Chapter 15 of \cite{Oxt96}.) Let $A=\cap_{n=1}^{\infty}A_{n}.$
If $\sigma\in A,$ then $\mathcal{R}(\sigma):=\cap_{n=1}^{\infty}\{(x,y)\in\mathbb{R}^{2}\times\mathbb{R}^{2}:(\sigma,(x,y))\in\mathcal{G}_{n}\setminus\mathcal{G}_{0}\}$
is a dense $G_{\delta}$ subset of ${\rm Int}(M(\sigma))\times{\rm Int}(M(\sigma))$
having the required property.
\end{proof}

\section*{Acknowledgment}

We thank Sergei Tabachnikov for a helpful conversation concerning
this paper.

{\medskip\smaller\scshape Department of Mathematics, Indiana University, Bloomington, IN 47405, USA}\\
\emph{E-mail address}: {\tt tjdauer@indiana.edu, gerber@indiana.edu}

\begin{figure}
	\centering
\begin{tikzpicture}[line cap=round,line join=round,>=triangle 45,x=1.0cm,y=1.0cm] 		\clip(4.77445412898742,-4.541816658119153) rectangle (19.109468585764564,3.849229103360718);
		\draw [rotate around={0.:(11.11,-0.32)}] (11.11,-0.32) ellipse (5.567513548539673cm and 3.8203019662289623cm); 		\draw (7.06,-0.32)-- (12.158932151223185,3.431887786368514); 		\draw (12.158932151223185,3.431887786368514)-- (14.78539868402223,0.14832066884038686); 		\draw [shift={(10.962116349441303,-10.193134356136868)},dash pattern=on 1pt off 1pt]  plot[domain=1.3560879464883926:1.7368992278911963,variable=\t]({1.*13.512471182452135*cos(\t r)+0.*13.512471182452135*sin(\t r)},{0.*13.512471182452135*cos(\t r)+1.*13.512471182452135*sin(\t r)}); 		\draw [dash pattern=on 1pt off 1pt] (7.06,-0.32)-- (12.302247433103593,3.252717276809314); 		
{\tiny		
\draw [fill=black] (7.06,-0.32) circle (1.5pt); 		\draw[color=black] (6.871696382561103,-0.1469815265127122) node {$A$}; 		\draw[color=black] (11,3.6) node {$\sigma$}; 		\draw [fill=black] (12.158932151223185,3.431887786368514) circle (1.5pt); 		\draw[color=black] (12.23319593087011,3.7) node {$P$}; 		\draw [fill=black] (12.158932151223185,3.431887786368509) circle (1.5pt); 		\draw [fill=black] (12.158932151223183,3.431887786368514) circle (1.5pt); 		\draw [fill=black] (14.78539868402223,0.14832066884038686) circle (1.5pt); 		\draw[color=black] (15.1,0.2715742003069488) node {$A_1$}; 		\draw[color=black] (11,3.05) node {$\widetilde{\sigma}$}; 		\draw [fill=black] (12.302247433103593,3.252717276809314) circle (1.5pt); 		\draw[color=black] (12.283023993586736,2.95) node {$\widetilde{P}$}; 		
}
\end{tikzpicture} 	\centerline{Figure 1. Changing the direction of a billiard path at $A$.} \end{figure}

\begin{picture}(0,0)%
\includegraphics{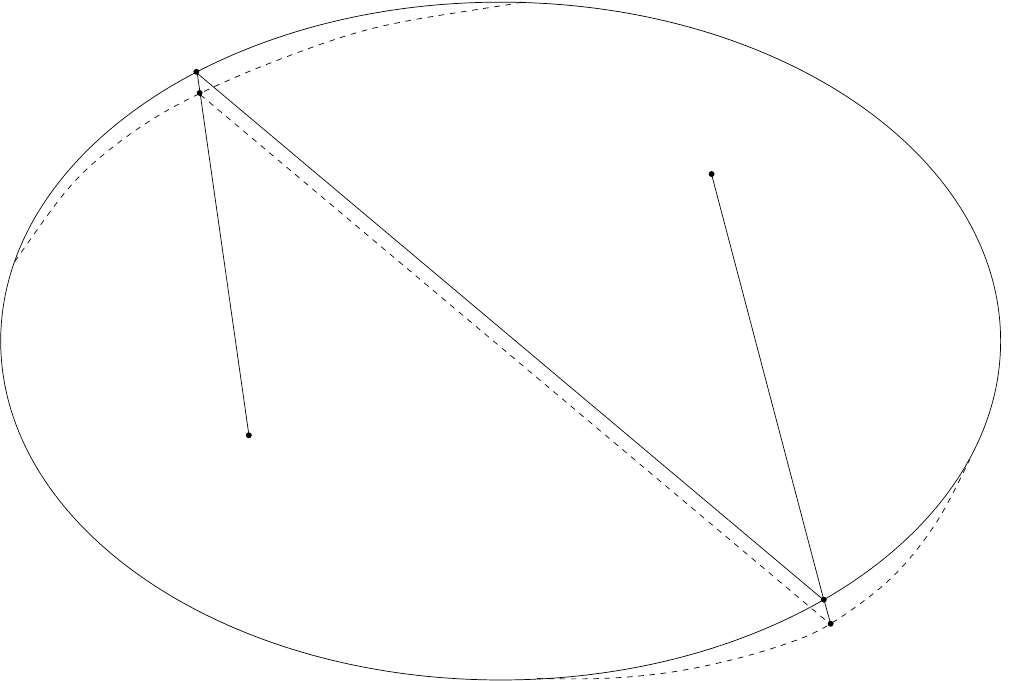}%
\end{picture}%
\setlength{\unitlength}{1776sp}%
\begingroup\makeatletter\ifx\SetFigFont\undefined%
\gdef\SetFigFont#1#2#3#4#5{%
  \reset@font\fontsize{#1}{#2pt}%
  \fontfamily{#3}\fontseries{#4}\fontshape{#5}%
  \selectfont}%
\fi\endgroup%
\centering
\begin{picture}(10783,7252)(10930,7927)

\put(13450,10200){\makebox(0,0)[lb]{\smash{{\SetFigFont{8}{6.0}{\familydefault}{\mddefault}{\updefault}{$A$}%
}}}}
\put(18400,13500){\makebox(0,0)[lb]{\smash{{\SetFigFont{8}{6.0}{\familydefault}{\mddefault}{\updefault}{$B$}%
}}}}
\put(19700,8100){\makebox(0,0)[lb]{\smash{{\SetFigFont{8}{6.0}{\familydefault}{\mddefault}{\updefault}{$\widetilde{Q}$}%
}}}}
\put(19700,9050){\makebox(0,0)[lb]{\smash{{\SetFigFont{8}{6.0}{\familydefault}{\mddefault}{\updefault}{$Q$}%
}}}}
\put(12900,14600){\makebox(0,0)[lb]{\smash{{\SetFigFont{8}{6.0}{\familydefault}{\mddefault}{\updefault}{$P$}%
}}}}
\put(12800,13740){\makebox(0,0)[lb]{\smash{{\SetFigFont{8}{6.0}{\familydefault}{\mddefault}{\updefault}{$\widetilde{P}$}%
}}}}
\put(15000,15200){\makebox(0,0)[lb]{\smash{{\SetFigFont{8}{6.0}{\familydefault}{\mddefault}{\updefault}{$\sigma$}%
}}}}
\put(15400,14700){\makebox(0,0)[lb]{\smash{{\SetFigFont{8}{6.0}{\familydefault}{\mddefault}{\updefault}{$\widetilde{\sigma}$}%
}}}}
\put(20750,9100){\makebox(0,0)[lb]{\smash{{\SetFigFont{8}{6.0}{\familydefault}{\mddefault}{\updefault}{$\widetilde{\sigma}$}%
}}}}
\put(20400,9440){\makebox(0,0)[lb]{\smash{{\SetFigFont{8}{6.0}{\familydefault}{\mddefault}{\updefault}{$\sigma$}%
}}}}
\end{picture}%

\medskip\centerline{Figure 2. Replacing a segment of a billiard path by a parallel segment.}
 
\end{document}